\font \sevenrm=cmr7
\font \fiverm=cmr5
\newcommand{\nc}{\newcommand}
\newtheorem{theorem}{Theorem}
\newtheorem{definition}{Definition}
\newtheorem{proposition}{Proposition}
\newtheorem{ex}{Example}
\newtheorem{remark}{Remark}
\nc{\comment}[1]{[[{\tt #1}]] }
\nc{\Cal}[1]{{\mathcal {#1}}}
\nc{\mop}[1]{\mathop{\hbox {\rm #1} }\nolimits}
\nc{\gmop}[1]{\mathop{\hbox {\bf #1} }\nolimits}
\nc{\smop}[1]{\mathop{\hbox {\sevenrm #1} }\nolimits}
\nc{\ssmop}[1]{\mathop{\hbox {\fiverm #1} }\nolimits}
\nc{\mopl}[1]{\mathop{\hbox {\rm #1} }\limits}
\def\dbar{d\hskip-3pt \raise 4pt\hbox{-}}
\nc{\smopl}[1]{\mathop{\hbox {\sevenrm #1} }\limits}
\nc{\ssmopl}[1]{\mathop{\hbox {\fiverm #1} }\limits}
\nc{\frakg}{{\frak g}}
\nc{\g}[1]{{\frak {#1}}}
\def \restr#1{\mathstrut_{\textstyle |}\raise-6pt\hbox{$\scriptstyle #1$}}
\def \srestr#1{\mathstrut_{\scriptstyle |}\hbox to
-1.5pt{}\raise-4pt\hbox{$\scriptscriptstyle #1$}}
\nc{\wt}{\widetilde} \nc{\wh}{\widehat}
\nc{\redtext}[1]{\textcolor{red}{#1}}
\nc{\bluetext}[1]{\textcolor{blue}{#1}}
\nc\fleche[1]{\mathop{\hbox to #1 mm{\rightarrowfill}}\limits}
\nc{\ignore}[1]{}
\def\semi{\mathrel{\times}\kern -.85pt\joinrel\mathrel{\raise
1.4pt\hbox{${\scriptscriptstyle |}$}}}
\nc\R{{\mathbb R}}
\nc\N{{\mathbb N}}
\nc\inver{^{-1}}
\nc\point{\hbox{\bf .}}
\nc\un{\hbox{\bf 1}}
\def\racine{{\scalebox{0.3}{ 
\begin{picture}(12,12)(38,-38)
\SetWidth{0.5} \SetColor{Black} \Vertex(45,-33){5.66}
\end{picture}}}}
\def\arbreca{\,{\scalebox{0.15}{
\begin{picture}(8,156) (370,-147)
\SetWidth{2}
\SetColor{Black}
\Line(374,-143)(374,-99)
\Vertex(374,-96){9}
\Vertex(375,-144){12}
\Line(374,-92)(374,-48)
\Vertex(374,-45){9}
\Line(374,-42)(374,2)
\Vertex(374,5){9}
\end{picture}
}}\,}
\def\arbredh{\,{\scalebox{0.15}{
\begin{picture}(90,46) (330,-257)
\SetWidth{2}
\SetColor{Black}
\Vertex(375,-254){12}
\Line(376,-252)(395,-217)
\Vertex(395,-215){9}
\Line(374,-254)(335,-226)
\Vertex(334,-224){9}
\Line(375,-252)(356,-215)
\Vertex(355,-215){9}
\Line(374,-255)(417,-227)
\Vertex(418,-225){9}
\end{picture}
}}\,}
\def\racine{\,{\scalebox{0.07}{
\begin{picture}(29,29) (360,-285)
    \SetWidth{6}
    \SetColor{Black}
    \Vertex(375,-271){20}
  \end{picture}
  }}\,}
\def\echela{\,{\scalebox{0.07}{
\begin{picture}(33,116) (353,-443)
    \SetWidth{6}
    \SetColor{Black}
    \Vertex(369,-428){20}
    \Vertex(369,-341){16}
    \Line(369,-341)(369,-419)
  \end{picture}
  }}\,}
  \def\echelb{\,{\scalebox{0.07}{
   \begin{picture}(33,195) (351,-363)
    \SetWidth{6}
    \SetColor{Black}
    \Vertex(369,-262){16}
    \Line(369,-262)(369,-340)
    \Vertex(368,-348){20}
    \Line(369,-184)(369,-262)
    \Vertex(369,-182){16}
  \end{picture}
  }}\,}
  \def\arbrey{\,{\scalebox{0.07}{
  \begin{picture}(147,114) (299,-444)
    \SetWidth{6}
    \SetColor{Black}
    \Vertex(368,-429){20}
    \Vertex(313,-344){16}
    \Vertex(434,-348){16}
    \Line(312,-344)(368,-428)
    \Line(433,-348)(372,-428)
  \end{picture}
}}\,}
\def\arbreza{\,{\scalebox{0.07}{
\begin{picture}(147,200) (299,-358)
    \SetWidth{6}
    \SetColor{Black}
    \Vertex(368,-343){20}
    \Vertex(313,-258){16}
    \Vertex(434,-262){16}
    \Line(312,-258)(368,-342)
    \Line(433,-262)(372,-342)
    \Vertex(313,-172){16}
    \Line(311,-179)(311,-252)
  \end{picture}
}}\,}
  \def\arbrema{\,{\scalebox{0.07}{
   \begin{picture}(149,215) (296,-345)
    \SetWidth{6}
    \SetColor{Black}
    \Line(370,-244)(370,-317)
    \Vertex(371,-238){16}
    \Line(310,-149)(366,-233)
    \Line(432,-150)(372,-234)
    \Vertex(310,-144){16}
    \Vertex(433,-144){16}
    \Vertex(371,-330){20}
  \end{picture}
  }}\,}
\def\diagramme #1{\vskip 4mm \centerline {#1} \vskip 4mm}
\begin{document}
\title{
{Doubling bialgebras of rooted trees}}        
\author{Mohamed Belhaj Mohamed}
\address{{Laboratoire de math\'ematiques physique fonctions sp\'eciales et applications, universit\'e de sousse, rue Lamine Abassi 4011 H. Sousse,  Tunisie}}     
         \email{mohamed.belhajmohamed@isimg.tn}
 
\author{Dominique Manchon}
\address{Universit\'e Blaise Pascal,
       C.N.R.S.-UMR 6620,
        63177 Aubi\`ere, France}       
        \email{manchon@math.univ-bpclermont.fr}
        \urladdr{http://math.univ-bpclermont.fr/~manchon/}     
\date{May 2016}
\noindent{\footnotesize{${}\phantom{a}$ }}
\begin{abstract}
The linear space of rooted forest admits two graded bialgebra structures. The first is defined by A. Connes and D. Kreimer using admissible cuts, and the second is defined by D. Calaque, K. Ebrahimi-Fard and the second author using contraction of trees. In this article we define the doubling  of these two spaces. We construct two bialgebra  structures on these space which are in interaction, as well as two associative products.
We also show that these two bialgebras verify a commutative diagram similar to the diagram verified D. Calaque, K. Ebrahimi-Fard and the second author in the case of rooted trees Hopf algebra, and by the second author in the case of cycle free oriented graphs oriented graphs.
\end{abstract}
\maketitle
\textbf{MSC Classification}: 05C05, 16T05, 16T10, 16T15, 16T30.

\textbf{Keywords}: Bialgebras, Hopf algebras, Comodules, Rooted trees.
\tableofcontents

\section{Introduction}
 Rooted trees appear in the work of Cayley \cite{ca} in the context of differential equations. They are used in an essential way in the work of Butcher \cite{bu}, Grossman and Larson \cite{gl}, Munthe-Kaas and Wright \cite{mw} in the field of numerical analysis. They also appear in the context of renormalization in perturbative quantum field theory in the works of A. Connes and D. Kreimer \cite{ad98, A.D2000, DK98}, D. Calaque, K. Ebrahimi-Fard, D. Manchon \cite{ckm, ms} and L. Foissy \cite{lf}.\\

On the vector space $\Cal H$ spanned by the rooted forests and graded by the number of vertices, A. Connes and D. Kreimer introduce a Hopf algebra structure where the coproduct is defined by:
\begin{eqnarray*}
\Delta_{CK}(t) &=& \sum_{c \in {\mop{\tiny{Adm}}(t)}} P^c(t)\otimes R^c(t),
\end{eqnarray*}
where $\mop{Adm}(t)$ is the set of admissible cuts.\\
In the same context D. Calaque, K. Ebrahimi-Fard and the second author introduce on the commutative algebra $\tilde{\Cal H}$ generated by rooted forests a structure of bialgebra graded by the number of edges. The coproduct is defined by:  
$$\Delta_{\tilde {\Cal H}} (t) = \sum_{s\subseteq t} s \otimes t/s,$$
where $s$ is a covering subforest of the rooted tree $t$ and $t / s$ is the tree obtained by contracting each connected component of $s$ onto a vertex. Also they establish a relation between the bialgebra $\tilde{\Cal H}$ obtained this way and the Connes-Kreimer Hopf algebra of rooted trees $\Cal H$ by means of a natural $\tilde{\Cal H}$-comodule structure on $\Cal H$ given by:
$$\Phi(t)=\Delta_{\tilde {\Cal H}} (t) = \sum_{s\subseteq t} s \otimes t/s.$$
To be precise, the following diagram commutes:
\diagramme{
\xymatrix{
\Cal H \ar[d]_{\Delta} \ar[rr]^{\Phi} 
& &\tilde {\Cal H} \otimes\Cal H \ar[dd]^{I \otimes \Delta }\\
\Cal H \otimes\Cal H \ar[d]_{\Phi\otimes\Phi}\\
\tilde {\Cal H} \otimes\Cal H\otimes \tilde {\Cal H} \otimes \Cal H \ar[rr]^{m^{13}} 
&& \tilde {\Cal H}\otimes\Cal H\otimes \Cal H, }
}
\noindent
making $\Cal H$ a comodule-bialgebra on $\tilde{\Cal H}$ \cite{ckm}.\\

In this paper, we define the doubling spaces of $\Cal H$ and $\tilde{\Cal H}$ respectively denoted by $D$ and $\tilde{D}$. In other words, we denoted by ${V}$ the vector space spanned by the couples $(t,s)$ where $t$ is a tree and $s =P^{c_0}(t)$ where $c_0$ is an admissible cut of $t$. The doubling space ${D}$ is the symmetric algebra of $V$, i.e: ${D} : = S({V})$. Similarly, if $\tilde V$ is the vector space spanned by the couples $(t,s)$ where $t$ is a tree, and $s$ is a subforest of $t$, the doubling space $\tilde{D}$ is the symmetric algebra of $\tilde V$, i.e: $\tilde D : = S(\tilde V)$. Note that $D$ is strictly included in $\tilde{D}$, as there are subforest $s$ which are not of the form $P^{c_0}(t)$.\\ 
We prove that there exist graded bialgebra structures on $D$ and $\tilde{D}$, where the coproducts are defined respectively by:
For all $(t,s) \in D$: 
$$\Delta(t,s) =  \sum_{c \in {\mop{\tiny{Adm}}(s)}} \big(t, P^c(s)\big)\otimes \big( R^c(t), R^c(s)\big),$$
and for all  $(t,s) \in \tilde D$:
$$\Gamma (t,s) = \sum_{s'\subseteq s} (t , s') \otimes(t/s', s/s').$$ 
We show that $\Delta(V) \subset V\otimes V$ and $\Gamma (\tilde {V}) \subset \tilde {V}\otimes \tilde {V}$, which allows us to restrict $\Delta$ on $V$ and $\Gamma$ on $\tilde {V}$.\\
In the second part of this paper, we prove that $D$ admits a comodule structure on $\tilde D$ given by the coaction $\phi: D \longrightarrow \tilde{D} \otimes D$, which is defined for all $(t,s) \in D$ by restriction of $\Gamma$ to $D$: 
$$\phi (t,s) = \sum_{s'\subseteq s} (t , s') \otimes(t/s', s/s').$$
The coaction $\phi$ can also restrict to $V$ because $\phi (V) \subset \tilde{V}\otimes V$.\\
We construct an associative algebra structure on $V$ given by the associative product $\circledast : V \otimes V \longrightarrow V$, defined for all couples of forests $(t,s), (t',s')$ such that $s = P^{c} (t)$ and $s' = P^{c'} (t')$ by:
\begin{equation}
(t,s) \circledast (t',s') = \left\lbrace
\begin{array}{lcl}
(t,s \cup s')  \;\;\;\;\; \text{if} \;\; t' = R^c (t)\\
0  \;\; \;\; \;\; \;\; \text{if not},
\end{array}\right. 
\end{equation}
where $s \cup s'$ is the pruning of the cut $c'$ raised to the tree $t$. This product is obtained by dualizing the restriction of the coproduct $\Delta$ to $V$, identifying $V$ with its graded dual using the basis $\{(t,P^c(t)),\, t \hbox{ rooted tree and }c\in\mop{Adm}(t)\}$. We accordingly construct a second associative algebra structure on $\tilde V$ by dualizing the restriction of the coproduct $\Gamma$ to $\tilde V$, yielding the associative product $\sharp : \tilde V \otimes \tilde V \longrightarrow \tilde V$, defined by:
\begin{equation}
(t,s) \sharp (t',s') = \left\lbrace
\begin{array}{lcl}
(t,s \cup s')  \;\;\;\;\; \text{if} \;\; t' = t/s\\
0  \;\; \;\; \;\; \;\; \text{if not}.
\end{array}\right. 
\end{equation}
In the end of this article, we define a new map $\xi : \tilde V \otimes \tilde V \otimes V \longrightarrow \tilde V \otimes V$ by:
\begin{enumerate}
\item $\xi \big( (t', s') \otimes (t'',s'')\otimes (u, v)\big) = (t', s' \cup s'')\otimes (t'/(s' \cup s''), v),$\\
if $t'' = R^c(t')$, $v = P^{\tilde{c}}(t')$ and $u = t'/s'$, where $c$ is an admissible cut of $t'$, and $\tilde{c}$ is an admissible cut of $t'$ which does not meet $s'$ and $s''$.
\item $\xi \big( (t', s') \otimes (t'',s'')\otimes (u, v)\big) = 0,$\\ 
if $t', t'', s', s'', u$ and $v$ are forests which do not match the conditions of item (1). 
\end{enumerate} 
We prove that the coaction $\phi$ and the map $\xi$ make the following diagram commute:
\diagramme{
\xymatrix{
V \ar[d]_{\Delta} \ar[rr]^{\phi} 
&&\tilde V \otimes V  \ar[d]^{id \otimes \Delta}\\
 V \otimes V \ar[d]_{\phi\otimes \phi}&&\tilde V \otimes V \otimes V \\
\tilde V \otimes V \otimes \tilde V \otimes V \ar[rr]_{\tau^{23}} 
&&\ar[u]_{\xi \otimes id } \tilde V \otimes \tilde V \otimes  V \otimes V
 }
}
Moreover this diagram extends to the diagram:
\diagramme{
\xymatrix{
D \ar[d]_{\Delta} \ar[rr]^{\phi} 
&&\tilde D \otimes D  \ar[d]^{id \otimes \Delta}\\
 D \otimes D \ar[d]_{\phi\otimes \phi}&&\tilde D \otimes D \otimes D \\
\tilde D \otimes D \otimes \tilde D \otimes D \ar[rr]_{\tau^{23}} 
&&\ar[u]_{\xi \otimes id } \tilde D \otimes \tilde D \otimes  D \otimes D
 }
}
where the arrows are now algebra morphisms. This second diagram is similar to the commutative diagram verified by D. Calaque, K. Ebrahimi-fard and the second author in the case of rooted trees hopf algebra and by the second author in the case of cycle-free oriented graphs oriented graphs. The only difference is that the map $m^{13}$ is replaced here by the map $(\xi \otimes id)\circ \tau^{23}$. 
\section{Hopf algebras of rooted trees}
A {\sl rooted tree\/} is a finite connected simply connected oriented graph such that every vertex has exactly one incoming edge, except for a distinguished vertex (the root) which has no incoming edge.
The set of rooted trees is denoted by $T$ and the set of rooted trees with $n$ vertices is denoted by $T_n$.   
\begin{ex}
\begin{eqnarray*}
T_1 &=& \{\racine\}\\
T_2 &=& \{\echela\}\\
T_3 &=& \{\echelb , \arbrey \}\\
T_4 &=& \{\arbreca, \arbrema, \arbreza, \arbredh\}
\end{eqnarray*}
\end{ex}
Let $\mathcal{H} = S({T})$ be the algebra of rooted forest. A. Connes and D. Kreimer \cite{ad98}, \cite{DK98} showed that this space, graded according to the number of vertices, admits a structure of graded bialgebra. The product is the concatenation, and the coproduct is defined by:
\begin{eqnarray*}
\Delta_{CK}(t) &=& t \otimes \un + \un \otimes t + \sum_{c \in {\mop{\tiny{Adm'}}(t)}} P^c(t)\otimes R^c(t)\\
&=&\sum_{c \in {\mop{\tiny{Adm}}(t)}} P^c(t)\otimes R^c(t),
\end{eqnarray*}
where $\mop{Adm}(t)$ (resp $\mop{Adm'}(t)$) is the set of admissible cuts (resp. nontrivial admissible cuts) of a forest, i.e. the set of collections of edges such that any path from the root to a leaf contains at most one edge of the collection. We denote as usual by $P^c(t)$ (resp. $R^c(t)$) the pruning (resp. the trunk) of $t$, i.e. the subforest formed by the edges above the cut $c \in \mop{Adm}(t)$ (resp. the subforest formed by the edges under the cut). Note that the trunk of a tree is a tree, but the pruning of a tree may be a forest. $\un$ stands for the empty forest, which is the unit. One sees easily that $\deg(t) = \deg(P_c(t)) + \deg(R_c(t))$ for all admissible cuts. (See \cite{ckm} and \cite{lf}).\\
D. Calaque, K. Ebrahimi-Fard and D. Manchon showed that the space $\tilde {\Cal H}$ generated by the rooted forests, graded according to the number of edges, admits a structure of graded bialgebra \cite{ckm}. The unit is the empty forests, the product is the concatenation, and the coproduct is defined for any non empty forest $t$ by:
 $$\Delta_{\tilde {\Cal H}} (t) = \sum_{s\subseteq t} s \otimes t/s,$$
where $s$ is a covering subforest of a rooted tree $t$ and $t / s$ the tree obtained by contracting each connected component of $s$ onto a vertex, i.e. $s$ is a collection of disjoint sub-trees $(t_1, \cdots ,t_n)$ of $t$, covering $t$. In particular, two sub-trees of the forest have no vertex in common. The tree $t/s$ is obtained by contraction of each connected component of $s$ on a vertex.
\begin{ex}
The two coproducts applied to same tree $\arbreza$:
$$
\Delta_{\Cal H} (\arbreza)=\racine \racine \racine \racine \otimes\arbreza+\arbreza\otimes\racine+2\echela \racine \racine \otimes\arbrey+\echela \racine \racine\otimes\echelb+\echelb \racine\otimes\echela+\arbrey\racine\otimes\echela+\echela\echela\otimes\echela.
$$
$$
\Delta_{CK}(\arbreza)=\textbf{1}\otimes\arbreza+\arbreza\otimes
\textbf{1}+\racine\otimes\echelb+\echela\otimes\echela+\racine\otimes\arbrey+\echela\racine\otimes\racine
+\racine\racine\otimes\echela
$$
\end{ex}
The Hopf algebra $\Cal H'$ is given by identifying all elements of degree zero to unit $\un$:
\begin{equation}
{\Cal H'} = \wt{\Cal H} / \Cal J
\end{equation}
where $\Cal J$ is the ideal generated by the elements $\un - t$ where $t$ is a forest of degree zero.\\
The example of coproduct above becomes by identifying the unit to $\racine$:
$$
\Delta_{\Cal H'} (\arbreza)=\racine \otimes\arbreza+\arbreza\otimes\racine+2\echela \otimes\arbrey+\echela\otimes\echelb+\echelb \otimes\echela+\arbrey\otimes\echela+\echela\echela\otimes\echela.
$$

\section{Doubling bialgebras of trees}
We have studied the concept of doubling bialgebra in the context of the specified Feynman graphs Hopf algebra \cite{MBM}. We have proved that the doubling space of specified Feynman graphs, the vector space spanned by the $(\bar\Gamma, \bar\gamma)$ where $\bar\Gamma$ is locally $1PI$ specifed graph of the theory $\Cal T$, $\bar\gamma \subset \bar\Gamma$ loc $1PI$ and $\bar\Gamma / \bar\gamma$ is a specifed graph of $\Cal T$, admits a structure of graded bialgebra (see \cite{MBM} and \cite[\S 3]{mbm}). 

\subsection{Doubling bialgebra $\mathcal{H}_{CK}$}
Let ${V}$ the vector space spanned by the couple $(t,s)$ where $t$ is a tree and $s =P^{c_0}(t)$ where $c_0$ is an admissible cut of $t$. We define then the doubling bialgebra of trees $\mathcal{H}_{CK}$ by ${D} : = S({V})$ and we define the coproduct $\Delta$ for all $(t,s) \in D$ by: 
$$\Delta(t,s) =  \sum_{c \in {\mop{\tiny{Adm}}(s)}} \big(t, P^c(s)\big)\otimes \big( R^c(t), R^c(s)\big).$$
\begin{theorem}
$D$ is a graded bialgebra.
\end{theorem}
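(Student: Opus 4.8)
The plan is to treat $D=S(V)$ as the free commutative algebra on $V$, so that the product is automatically associative, commutative and unital, and to define both the coproduct $\Delta$ and a counit $\varepsilon$ first on the generators $(t,s)\in V$ and then extend them as algebra morphisms to all of $D$ (this extension is unique and well defined because $S(V)$ is free commutative and $D\otimes D$ is a commutative algebra). With this convention the compatibility of $\Delta$ with the product, and the fact that $\varepsilon$ is an algebra map, hold by construction; the genuine content lies in three points: that $\Delta$ actually lands in $V\otimes V$, that it is coassociative, and that all the structure respects a grading.

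First I would settle well-definedness, i.e. $\Delta(V)\subset V\otimes V$, which is the subtle combinatorial point. Writing $s=P^{c_0}(t)$, the edges of $s$ are edges of $t$ lying strictly above $c_0$, so any $c\in\mop{Adm}(s)$ lifts to an admissible cut $c'$ of $t$: one keeps the genuine edge-cuts of $c$ and adjoins, for each component of $s$ that is totally pruned, the edge of $c_0$ sitting just below it. One checks that $P^c(s)=P^{c'}(t)$, whence $(t,P^c(s))=(t,P^{c'}(t))\in V$; and that the part of $s$ surviving in the trunk, namely $R^c(s)$, is exactly the pruning of $R^c(t):=R^{c'}(t)$ by the edges of $c_0$ that were not cut, so that $R^c(s)=P^{c_0''}(R^c(t))$ for an admissible cut $c_0''$ of $R^c(t)$ and hence $(R^c(t),R^c(s))\in V$ as well. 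This bookkeeping about lifting cuts of $s$ to cuts of $t$ is where I expect the main obstacle to be.

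Next comes coassociativity. I would compute both $(\Delta\otimes\mop{id})\Delta(t,s)$ and $(\mop{id}\otimes\Delta)\Delta(t,s)$ and match them through the standard ``three-layer'' decomposition underlying the coassociativity of $\Delta_{CK}$, now applied to $s$. The two iterated sums are indexed respectively by pairs $(c,d)$ with $c\in\mop{Adm}(s)$ and $d\in\mop{Adm}(P^c(s))$, and by pairs $(c,e)$ with $c\in\mop{Adm}(s)$ and $e\in\mop{Adm}(R^c(s))$; both sets are in bijection with the ordered splittings of $s$ into a top part $T$, a middle part $M$ and a bottom part $B$. Under this bijection the three second-components read $(T,M,B)$ on both sides, while the three first-components read $t$, then $R^{c'}(t)=t\setminus T$, then $R^{c'}(t)\setminus M=t\setminus(T\cup M)$ on both sides, the key being that removing the lifted cut that prunes $T$ (resp. $T\cup M$) from $t$ is precisely what records the trunk in each tensor factor. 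Hence the two triple sums coincide term by term.

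Finally, the grading and counit. I would grade $V$, and hence $D$, by the number of vertices of the second component $s$; since an admissible cut of $s$ partitions its vertex set, one has $|s|=|P^c(s)|+|R^c(s)|$, so $\Delta$ preserves total degree, while the product (disjoint union on the $s$-component) is visibly additive, making $D$ a graded algebra and $\Delta$ a graded map. For the counit I would set $\varepsilon(t,s)=1$ if $s=\un$ and $0$ otherwise, extended multiplicatively; the counit axioms then follow because the only surviving term of $(\varepsilon\otimes\mop{id})\Delta(t,s)$ is the empty-cut term $(t,\un)\otimes(t,s)$ and the only surviving term of $(\mop{id}\otimes\varepsilon)\Delta(t,s)$ is the total-cut term $(t,s)\otimes(R^{c_0}(t),\un)$, each returning $(t,s)$. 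Once Step~1 is in place and the first-components in the iterated coproduct of Step~3 are correctly identified with ``$t$ minus its upper layers,'' coassociativity reduces cleanly to the known Connes--Kreimer identity on $s$, and the grading and counit are routine.
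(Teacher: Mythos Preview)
Your proposal is correct and follows essentially the same route as the paper: coassociativity is proved by matching the two iterated coproducts through the bijection between pairs of nested admissible cuts of $s$ (your ``three-layer'' splitting is exactly the paper's condition $c'>c$), the grading is by $|s|$, the counit is $\varepsilon(t,s)=\varepsilon(s)$, and multiplicativity is immediate. You are in fact more careful than the paper about the well-definedness step $\Delta(V)\subset V\otimes V$, which the paper defers to a remark after the proof and treats more tersely.
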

\begin{proof}
The unit $\textbf{1}$ is identified to empty forest, the counit $\varepsilon$ is given by $\varepsilon (t, s) = \varepsilon (s)$ and the graduation is given by the number of vertices of $s$:
$$ |(t, s)|  = |s|.$$
The product is defined by:
$$(t, s)(t', s') = (tt', ss').$$
We now calculate:
\begin{eqnarray*}
(\Delta \otimes id)\Delta (t, s) &=& (\Delta \otimes id) \Big( \sum_{c \in {\mop{\tiny{Adm}}(s)}} \big(t, P^c(s)\big)\otimes \big( R^c(t), R^c(s)\big) \Big)\\
&=& \sum_{c \in {\mop{\tiny{Adm}}(s)}\atop c' \in {\mop{\tiny{Adm}}(P^c(s))}} \Big(t,P^{c'}\big(P^c(s)\big) \Big) \otimes \Big( R^{c'}(t), R^{c'}\big(P^c(s)\big)\Big) \otimes \big( R^c(t), R^c(s)\big)\\
&=& \sum_{{c \in {\mop{\tiny{Adm}}(s)};c' \in {\mop{\tiny{Adm}}(s)}}\atop {c'> c }} \big(t,P^{c'}(s)\big) \otimes \Big( R^{c'}(t), R^{c'}\big(P^c(s)\big)\Big) \otimes \big( R^c(t), R^c(s)\big).
\end{eqnarray*}
The notation ${c '> c}$ denotes the cut $c$ is below the cut $c'$. \\
On the other hand,
\begin{eqnarray*}
(id \otimes \Delta)\Delta (t, s) &=& (id \otimes \Delta) \Big( \sum_{c' \in {\mop{\tiny{Adm}}(s)}} \big(t, P^{c'}(s)\big)\otimes \big( R^{c'}(t), R^{c'}(s)\big) \Big)\\
&=& \sum_{c' \in {\mop{\tiny{Adm}}(s)} \atop c \in {\mop{\tiny{Adm}}(R^{c'}(t))}} \big(t, P^{c'}(s)\big)\otimes \Big( R^{c'}(t), P^c \big(R^{c'}(s)\big)\Big)\otimes \Big(R^c( R^{c'}(t)\big), R^c \big( R^{c'}(s)\big)\Big).
\end{eqnarray*}
The condition $\{ c' \in {\mop{\tiny{Adm}}(s)} ; c \in {\mop{\tiny{Adm}}(R^{c'}(t))} \}$ is equivalent to $\{{c \in {\mop{\tiny{Adm}}(s)} ; c' \in {\mop{\tiny{Adm}}(s)}} \text{and}\; {c'> c }\}$ and we obtain the following equalities:
$$P^c (R^{c'}(s)) = R^{c'} (P^c (s)), \hskip 5mm R^c( R^{c'}(t)) = R^c(t), \hskip 5mm R^c( R^{c'}(t)) = R^c(s).$$
Then: 
\begin{eqnarray*}
(id \otimes \Delta)\Delta (t, s)&=& \sum_{{c \in {\mop{\tiny{Adm}}(s)};c' \in {\mop{\tiny{Adm}}(s)}}\atop {c'> c }} \big(t,P^{c'}(s)\big) \otimes \Big( R^{c'}(t), R^{c'}\big(P^c(s)\big)\Big) \otimes \big( R^c(t), R^c(s)\big)\\
&=& (\Delta \otimes id)\Delta (t, s).
\end{eqnarray*}
Hence $(\Delta \otimes id)\Delta = (id \otimes \Delta)\Delta$, and consequently $\Delta$ is co-associative. Finally we have directly:
$$\Delta \big((t, s).(t', s')\big) = \Delta (t, s) \Delta (t', s').$$ 
\end{proof}
\begin{remark}
We remark here that $\Delta (V) \subset V \otimes V$. Indeed, if $(t, s) \in V$ then $\big(t,P^{c}(s)\big) \in V$, since a pruning of $s$ is also a pruning of $t$. Similary $\big( R^c(t), R^c(s)\big) \in V$ because $R^c(t)$ is a tree, and $R^c(s) = R^c(P^{c_0} (t)) = P^{c_0} (R^c(t))$ is a pruning of $R^c(t)$. So we can restrict the coassociative product $\Delta$ to $V$.  
\end{remark}
\begin{proposition}\label{p1} The second projection 
\begin{eqnarray*}
P_2: D &\longrightarrow& {{\Cal H}} \\ (t, s) &\longmapsto& s
\end{eqnarray*} 
is a bialgebra morphism.
\end{proposition}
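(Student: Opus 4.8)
The plan is to check the two defining properties of a bialgebra morphism -- compatibility with the algebra structures and with the coalgebra structures -- after first extending $P_2$ to all of $D$. Since the assignment $(t,s)\mapsto s$ is a linear map from $V$ to $\Cal H$ (note that $s=P^{c_0}(t)$ is in general a forest, hence a genuine element of $\Cal H=S(T)$, not merely a tree), the universal property of the symmetric algebra $D=S(V)$ lets me extend it uniquely to an algebra morphism $P_2\colon D\to\Cal H$. Multiplicativity and unitality then need only be recorded on generators, where they are immediate: $P_2\big((t,s)(t',s')\big)=P_2(tt',ss')=ss'=P_2(t,s)\,P_2(t',s')$ and $P_2(\un)=\un$, the products on both sides being concatenation of forests.

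The heart of the argument is compatibility with the coproducts, i.e. the identity $(P_2\otimes P_2)\circ\Delta=\Delta_{CK}\circ P_2$. Because $\Delta$, $\Delta_{CK}$ and $P_2$ are all algebra morphisms and $D$ is generated as an algebra by $V$, both composites are algebra morphisms, so it suffices to compare them on a single generator $(t,s)\in V$. Applying $P_2\otimes P_2$ to the defining formula for $\Delta$ and discarding the first coordinates yields
$$(P_2\otimes P_2)\,\Delta(t,s)=\sum_{c\in\mop{Adm}(s)}P^c(s)\otimes R^c(s),$$
which is exactly $\Delta_{CK}(s)=\Delta_{CK}\big(P_2(t,s)\big)$. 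The two sides therefore agree term by term; this works precisely because $\Delta$ was built so that forgetting the first coordinate recovers the Connes--Kreimer coproduct of $s$.

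For the counit I would verify on generators that $\varepsilon\big(P_2(t,s)\big)=\varepsilon(s)=\varepsilon(t,s)$, using the definition $\varepsilon(t,s)=\varepsilon(s)$ of the counit of $D$ from the preceding theorem, and then extend multiplicatively. One may also note that $P_2$ is graded, since $|(t,s)|=|s|$ is the number of vertices of $s$, which is its degree in $\Cal H$. I do not expect a genuine obstacle: the verification is essentially bookkeeping, and the only point demanding care is the reduction to generators in the coproduct check, which relies on knowing that all three maps involved are algebra morphisms and that $s$ is read as an element of $\Cal H$ rather than as a tree.
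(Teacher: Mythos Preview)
Your proof is correct and follows essentially the same approach as the paper's: both reduce to checking $(P_2\otimes P_2)\circ\Delta=\Delta_{CK}\circ P_2$ on a generator $(t,s)$ and observe that discarding the first coordinate in each tensor factor of $\Delta(t,s)$ yields exactly $\Delta_{CK}(s)$. Your version is slightly more thorough in that you explicitly justify the reduction to generators (all maps involved are algebra morphisms), verify the counit condition, and note gradedness, whereas the paper leaves these points implicit.
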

\begin{proof}
The fact that $P_2$ is an algebra morphism is trivial. It suffices to show that $P_2$ is a coalgebra morphism, i.e. $P_2$ verifies the following commutative diagram: 
\diagramme{
\xymatrix{ D \ar[rrr]^{P_2}\ar[d]_{\Delta}
&&&\Cal H\ar[d]^{\Delta}\\
D \otimes D
\ar[rrr]_{P_2 \otimes P_2}&&&\Cal H \otimes \Cal H
}
}
which can be seen by direct calculation:
\begin{eqnarray*}
\Delta \circ P_2 (t, s) &=& \Delta(s) \\  
&=&\sum_{c \in {\mop{\tiny{Adm}}(s)}} P^c(s)\otimes  R^c(s)\\
&=&\sum_{c \in {\mop{\tiny{Adm}}(s)}} P_2 \big(t, P^c(s)\big)\otimes P_2 \big( R^c(t), R^c(s)\big)\\
&=& (P_2 \otimes P_2) \Delta (t, s).
\end{eqnarray*}
\end{proof}
\subsection{Doubling bialgebra $\tilde{\mathcal H}$}
Let $\tilde V$ be the vector space spanned by the couples $(t,s)$ where $t$ is a tree, and $s$ is a subforest of $t$. We define then the doubling of bialgebra $\tilde {\Cal H}$ by $\tilde D : = S(\tilde V)$ and we define the coproduct $\Gamma$ for all  $(t,s) \in \tilde D$ by:
$$\Gamma (t,s) = \sum_{s'\subseteq s} (t , s') \otimes(t/s', s/s'),$$ 
\begin{theorem}
$\tilde D$ is a graded bialgebra.
\end{theorem}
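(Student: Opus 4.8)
The plan is to follow exactly the pattern used to prove that $D$ is a graded bialgebra. First I would fix the auxiliary structure: the unit is the empty forest, the product is concatenation $(t,s)(t',s') = (tt', ss')$ (extended to $\tilde D = S(\tilde V)$ as the symmetric-algebra product, so that a monomial is read as a forest of pairs), the counit is $\varepsilon(t,s) = \varepsilon(s)$ (nonzero precisely when $s$ carries no edge), and the grading is given by the number of edges of the second component, $|(t,s)| = \#E(s)$, extended multiplicatively. Since contracting a subforest $s'\subseteq s$ partitions the edges of $s$ into those of $s'$ and those of $s/s'$, one has $\#E(s) = \#E(s') + \#E(s/s')$; this is what makes $\Gamma$ a graded map, while the product manifestly adds degrees. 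I would also record, as in the remark following the theorem on $D$, that $\Gamma(\tilde V)\subseteq \tilde V\otimes\tilde V$: if $s'\subseteq s\subseteq t$ then $s'$ is a subforest of $t$, the contraction $t/s'$ is again a tree, and $s/s'$ is a subforest of $t/s'$, so both $(t,s')$ and $(t/s',s/s')$ lie in $\tilde V$.

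The heart of the argument is the coassociativity of $\Gamma$, which I would prove by expanding both iterated coproducts and matching them term by term. On one side,
$$(\Gamma\otimes id)\Gamma(t,s) = \sum_{s''\subseteq s'\subseteq s} (t,s'')\otimes (t/s'', s'/s'')\otimes (t/s', s/s'),$$
and on the other,
$$(id\otimes\Gamma)\Gamma(t,s) = \sum_{s'\subseteq s}\ \sum_{\bar s\subseteq s/s'} (t,s')\otimes (t/s', \bar s)\otimes \big((t/s')/\bar s,\ (s/s')/\bar s\big).$$
The two sums are identified through the standard bijection of the contraction coproduct: chains $s''\subseteq s'\subseteq s$ correspond to pairs $(a,b)$ with $a\subseteq s$ and $b\subseteq s/a$, via $a = s''$ and $b = s'/s''$. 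Under this correspondence the first tensor factors agree (namely $(t,s'')=(t,a)$), the second factors agree once one reads $\bar s = s'/s''$, and the third factors agree by the nested-contraction identities $(t/s'')/(s'/s'') = t/s'$ and $(s/s'')/(s'/s'') = s/s'$. These identities — contracting $s''$ and then $s'/s''$ equals contracting $s'$ — are exactly the ones underlying the coassociativity of $\Delta_{\tilde {\Cal H}}$, now carried along with the untouched first component $t$. This gives $(\Gamma\otimes id)\Gamma = (id\otimes\Gamma)\Gamma$.

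Finally I would check the remaining axioms, all of which are routine. Compatibility with the product, $\Gamma\big((t,s)(t',s')\big) = \Gamma(t,s)\,\Gamma(t',s')$, follows because a subforest of a disjoint union $ss'$ is exactly a pair of subforests $(u\subseteq s,\ u'\subseteq s')$ and contraction is compatible with disjoint union, $(tt')/(uu') = (t/u)(t'/u')$ and likewise for the second component. The counit axioms hold because in $(\varepsilon\otimes id)\Gamma(t,s)$ only the edgeless choice of $s'$ survives (giving $t/s'=t$, $s/s'=s$, hence $(t,s)$), and symmetrically $(id\otimes\varepsilon)\Gamma(t,s)$ forces $s'=s$, again returning $(t,s)$.

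The main obstacle is the coassociativity step: one must set up the reindexing bijection carefully and verify the nested-contraction identities for both the ambient tree $t$ and the subforest $s$ simultaneously. Everything else reduces to the multiplicativity of subforests over disjoint unions and the edge-additivity of contraction.
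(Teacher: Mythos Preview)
Your proposal is correct and follows essentially the same approach as the paper: both expand $(\Gamma\otimes id)\Gamma$ and $(id\otimes\Gamma)\Gamma$ and match them via the chain/pair bijection $s''\subseteq s'\subseteq s \leftrightarrow (s'', \bar s\subseteq s/s'')$ together with the nested-contraction identities $(t/s'')/(s'/s'')=t/s'$ and $(s/s'')/(s'/s'')=s/s'$. You are in fact more thorough (you spell out the counit and multiplicativity checks and the edge-additivity under contraction), and your choice to grade by the number of \emph{edges} of $s$ is the correct one consistent with $\tilde{\Cal H}$; the paper's phrase ``number of vertices of $s$'' here appears to be a slip carried over from the proof for $D$.
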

\begin{proof}
The unit $\textbf{1}$ is identified to empty graph, the counit $\varepsilon$ is given by $\varepsilon (t, s ) = \varepsilon (s)$ and the graduation is given by the number of vertices of $s$:
$$ |(t, s)|  = |s|.$$
The product is given by:
$$(t, s)(t', s') = (tt', ss').$$
The coassocitivity of coproduct $\Gamma$ is given by this calculation:
\begin{eqnarray*}
(\Gamma \otimes id)\Gamma (t, s) &=& (\Gamma \otimes id) \big( \sum_{s' \subseteq s } ( t, s')  \otimes (t / s, s / s') \big)\\
&=& \sum_{s' \subseteq s \atop s'' \subseteq s' } ( t, s'')\otimes (t / s'', s' / s'') \otimes (t / s, s / s')\\
&=& \sum_{s'' \subseteq s' \subseteq s } ( t, s'')\otimes (t / s'', s' / s'') \otimes (t / s, s / s'),
\end{eqnarray*}
whereas
\begin{eqnarray*}
(id \otimes \Gamma)\Gamma (t, s) &=& (id \otimes \Gamma) \big( \sum_{s'' \subseteq s } ( t, s'')  \otimes (t / s, s / s'') \big)\\
&=& \sum_{s'' \subseteq s  \atop r \subseteq s/s'' } ( t, s'')\otimes (t / s'', r) \otimes ( (t /s'')/r, (t /s'')/r ).
\end{eqnarray*}
As $r \subseteq s/s''$, then there exists a forest $s'$ such that: $s'' \subseteq s' \subseteq s$ and $r \cong s' / s''$. Hence:
\begin{eqnarray*}
(id \otimes \Gamma)\Gamma (t, s) &=&  \sum_{s'' \subseteq s' \subseteq s } ( t, s'')\otimes (t / s'', s' / s'') \otimes ((t /s'')/(s' /s''), (s /s'')/(s' /s''))\\
&=& \sum_{s'' \subseteq s' \subseteq s } ( t, s'')\otimes (t / s'', s' / s'') \otimes (t / s, s / s').
\end{eqnarray*}
Therefore $(\Gamma \otimes id)\Gamma = (id \otimes \Gamma)\Gamma$, and thus $\Gamma$ is coassociative. Finally we show immediately that:
$$\Gamma \big((t, s) (t', s')\big) = \Gamma (t, s)\Gamma (t', s').$$
\end{proof}
\begin{remark}
We note here that $\Gamma (\tilde V) \subset \tilde V \otimes \tilde V$. Indeed, if $(t, s) \in V$ then $(t, s') \in \tilde V$ and $(t/s', s/s') \in \tilde V$. So we can restrict the coassociative product $\Gamma$ to $\tilde V$.  
\end{remark}
\begin{proposition} The second projection 
\begin{eqnarray*}
P_2: {\wt{D}} &\longrightarrow& {\wt{\Cal H}} \\ (t, s) &\longmapsto& s
\end{eqnarray*} 
is a morphism of graded bialgebras.
\end{proposition}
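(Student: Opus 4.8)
The plan is to follow exactly the pattern of Proposition \ref{p1}: verify first that $P_2$ respects the algebra structure, the counit and the grading (all immediate from the definitions), and then check the single nontrivial point, namely that $P_2$ intertwines the coproduct $\Gamma$ on $\wt D$ with the coproduct $\Delta_{\wt{\Cal H}}$ on $\wt{\Cal H}$.

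For the algebra part, since the product on $\wt D$ is componentwise, $P_2\big((t,s)(t',s')\big)=P_2(tt',ss')=ss'=P_2(t,s)\,P_2(t',s')$, and $P_2(\un)=\un$; similarly $\varepsilon(t,s)=\varepsilon(s)=\varepsilon\big(P_2(t,s)\big)$, so the counits agree. The grading is preserved tautologically, since $|(t,s)|=|s|$ is by definition the degree of $P_2(t,s)$ in $\wt{\Cal H}$.

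The heart of the argument is the commutativity of the diagram
\diagramme{
\xymatrix{ \wt D \ar[rrr]^{P_2}\ar[d]_{\Gamma}
&&&\wt{\Cal H}\ar[d]^{\Delta_{\wt{\Cal H}}}\\
\wt D \otimes \wt D
\ar[rrr]_{P_2 \otimes P_2}&&&\wt{\Cal H} \otimes \wt{\Cal H}
}
}
which I would establish by direct computation. On one side,
$$\Delta_{\wt{\Cal H}}\circ P_2(t,s)=\Delta_{\wt{\Cal H}}(s)=\sum_{s'\subseteq s} s'\otimes s/s'.$$
On the other side, applying $P_2\otimes P_2$ to $\Gamma(t,s)=\sum_{s'\subseteq s}(t,s')\otimes(t/s',s/s')$ and using $P_2(t,s')=s'$ together with $P_2(t/s',s/s')=s/s'$ yields the same sum $\sum_{s'\subseteq s} s'\otimes s/s'$. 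The key observation making these two expressions agree term by term is that the subforests $s'\subseteq s$ indexing $\Gamma(t,s)$ are literally the same covering subforests of $s$ that index $\Delta_{\wt{\Cal H}}(s)$, and that the second component of each tensor factor produced by $\Gamma$ is exactly the corresponding factor of $\Delta_{\wt{\Cal H}}(s)$.

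I expect no genuine obstacle here: unlike the coassociativity of $\Gamma$ (which required the reindexing $r\cong s'/s''$ between nested subforests), the present statement is a formal consequence of the very shape of $\Gamma$, whose right-hand legs carry the second components $(t/s',s/s')$ whose images under $P_2$ reconstruct $\Delta_{\wt{\Cal H}}$. The only point that deserves attention is that the first component $t$ appearing in each pair $(t,s')$ is entirely inert under $P_2$, so that no compatibility condition on $t$ is needed and the matching of the index sets is unobstructed.
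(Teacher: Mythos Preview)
Your proof is correct and follows essentially the same route as the paper: the paper likewise declares the algebra-morphism part trivial and checks the coalgebra compatibility by the same direct computation, matching $\Delta_{\wt{\Cal H}}(s)=\sum_{s'\subseteq s}s'\otimes s/s'$ term by term with $(P_2\otimes P_2)\Gamma(t,s)$. Your write-up is in fact slightly more careful, as you make the unit, counit, and grading checks explicit.
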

\begin{proof}
The fact that $P_2$ is an algebras morphism is trivial, it suffices to show that $P_2$ is a coalgebra morphism, analogously to Proposition \ref{p1}:
\begin{eqnarray*}
\Gamma \circ P_2 (t, s) &=& \Gamma(s) \\  
&=& \sum_{s' \subseteq s} s'\otimes s' / s'\\ 
&=& \sum_{s' \subseteq s} P_2 (t, s')\otimes P_2 (t/s', s/s')\\ 
&=& (P_2 \otimes P_2) \Gamma (t, s).
\end{eqnarray*}
\end{proof}
\section{Comodule structure}
\subsection{Comodule structure on bialgebras of trees and bialgebras of oriented graphs}
D. Calaque, K. Ebrahimi-Fard and D. Manchon  have studied the Connes-Kreimer Hopf algebra $\Cal H$ graduated following the number of vertices in \cite{ckm}, as comodule on a Hopf algebra of rooted trees $\tilde {\Cal H}$ graduated according to the number of edges. This structure is defined as follows: For $\un$ we have: $\Phi(\un)=\racine \otimes \un$, and for any non-empty tree $t$ we have:
$$\Phi(t)=\Delta_{\tilde {\Cal H}} (t) = \sum_{s\subseteq t} s \otimes t/s.$$ We can also write $\Phi(t)$ as follows:
\begin{eqnarray*}
\Phi(t) &=& \Delta_{\tilde {\Cal H}} (t) = \sum_{s\subseteq t} s \otimes t/s\\
&=& \racine \otimes t + \left( t \otimes \racine  +  \sum_{s \hbox{ \sevenrm proper sub-forest of} t} s \otimes t/s \right ).
\end{eqnarray*}
D. Calaque, K. Ebrahimi-Fard and the second author showed the existence of a relation between this coaction $\Phi$ and the Connes-Kreimer coproduct $\Delta_{CK}$. 
\begin{equation}
\Delta_{CK}(t) = t \otimes \un + \un \otimes t + \sum_{c \in {\mop{\tiny{Adm}}(t)}} P^c(t)\otimes R^c(t),
 \end{equation}
 This relation is given by this theorem:
\begin{theorem}\cite{ckm} 
This diagram is commutative:
\diagramme{
\xymatrix{
\Cal H \ar[d]_{\Delta} \ar[rr]^{\Phi} 
& &\tilde {\Cal H} \otimes\Cal H \ar[dd]^{I \otimes \Delta_{CK} }\\
\Cal H \otimes\Cal H \ar[d]_{\Phi\otimes\Phi}\\
\tilde {\Cal H} \otimes\Cal H\otimes \tilde {\Cal H} \otimes \Cal H \ar[rr]^{m^{13}} 
&& \tilde {\Cal H}\otimes\Cal H\otimes \Cal H }
}
i.e : The following identity is verified:
\begin{equation}\label{codistrib}
(\mop{Id}_{\tilde {\Cal H}} \otimes\Delta_{CK})\circ\Phi=m^{1,3}\circ(\Phi\otimes\Phi)\circ\Delta_{CK},
\end{equation}
where $ m^{1,3}:\tilde {\Cal H} \otimes \Cal H \otimes \tilde {\Cal H} \otimes \Cal H 
\longrightarrow \tilde {\Cal H} \otimes \Cal H \otimes \Cal H $ is defined by:
\begin{equation}
m^{1,3}(a\otimes b\otimes c\otimes d)=ac\otimes b\otimes d.
\end{equation}
\end{theorem}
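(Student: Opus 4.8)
The plan is to prove identity \eqref{codistrib} by expanding both composite maps on a single rooted tree and exhibiting a bijection between the two resulting index sets that matches the summands term by term. Since $\Phi$ and $\Delta_{CK}$ are algebra morphisms and $m^{1,3}$ is one as well (here the commutativity of $\tilde{\Cal H}$ is what makes the interchange of the two $\tilde{\Cal H}$-factors legitimate), both sides of \eqref{codistrib} are algebra morphisms out of $\Cal H$, so it suffices to verify the equality on an arbitrary rooted tree $t$.

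First I would compute the left-hand side. Applying $\Phi$ and then $\mop{Id}_{\tilde{\Cal H}}\otimes\Delta_{CK}$ gives
\begin{equation*}
(\mop{Id}_{\tilde{\Cal H}}\otimes\Delta_{CK})\circ\Phi(t)=\sum_{s\subseteq t}\ \sum_{c'\in\mop{Adm}(t/s)} s\otimes P^{c'}(t/s)\otimes R^{c'}(t/s),
\end{equation*}
the outer sum running over covering subforests $s$ of $t$ and the inner one over admissible cuts of the contracted tree $t/s$. For the right-hand side, applying $\Delta_{CK}$, then $\Phi\otimes\Phi$, and finally $m^{1,3}$ (which concatenates the two $\tilde{\Cal H}$-factors) yields
\begin{equation*}
m^{1,3}\circ(\Phi\otimes\Phi)\circ\Delta_{CK}(t)=\sum_{c\in\mop{Adm}(t)}\ \sum_{s_1\subseteq P^c(t)}\ \sum_{s_2\subseteq R^c(t)} s_1 s_2\otimes P^c(t)/s_1\otimes R^c(t)/s_2.
\end{equation*}

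The heart of the argument is a bijection between the two index sets. Starting from a pair $(s,c')$ on the left, I would lift $c'$ to a collection $c$ of edges of $t$: every edge of $t/s$ arises from an edge of $t$ joining two distinct connected components of $s$, so $c'$ singles out a corresponding set $c$ of inter-component edges of $t$. Because each root-to-leaf path of $t$ projects onto a root-to-leaf path of $t/s$ and meets $c$ exactly as often as its image meets $c'$, admissibility of $c'$ forces admissibility of $c$. Since $c$ cuts only inter-component edges, every connected component of $s$ lies entirely above or entirely below $c$; writing $s_1$ for the components lying in $P^c(t)$ and $s_2$ for those lying in $R^c(t)$ we obtain covering subforests $s_1\subseteq P^c(t)$ and $s_2\subseteq R^c(t)$ with $s=s_1 s_2$ (the concatenation in $\tilde{\Cal H}$). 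Conversely, a triple $(c,s_1,s_2)$ produces $s:=s_1 s_2$ together with the image $c'$ of $c$ in $t/s$, and the two constructions are mutually inverse.

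The remaining step, which I expect to be the main obstacle, is to check that cutting and contracting commute along this bijection, namely
\begin{equation*}
P^{c'}(t/s)=P^c(t)/s_1,\qquad R^{c'}(t/s)=R^c(t)/s_2.
\end{equation*}
One must verify that contracting the components of $s$ in $t$ and then cutting along $c'$ yields the same forest as first cutting $t$ along $c$ and then contracting the surviving components $s_1$ (resp. $s_2$) in the pruning (resp. the trunk); this is a purely combinatorial statement about the identification of vertices and edges under contraction, and it relies crucially on the fact that $c$ meets no edge internal to a component of $s$. Granting these equalities, the left- and right-hand summands coincide under the bijection, whence \eqref{codistrib} holds. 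I would finish by noting that the degenerate contributions fit the same scheme under the conventions already fixed (the empty and full cuts of $t$ and the extremal subforests give, via the bijection, the boundary terms $\racine\otimes t$ and $t\otimes\racine$ recorded in the expansion of $\Phi$), so that no separate treatment of the unit is required.
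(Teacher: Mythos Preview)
The paper does not actually supply a proof of this theorem: it is merely quoted from \cite{ckm} and no argument is reproduced here, so there is nothing to compare your proposal against within the present paper. That said, your proof is correct and is essentially the standard argument one finds in \cite{ckm}: reduce to a single tree using multiplicativity, expand both sides, and set up the bijection $(s,c')\leftrightarrow(c,s_1,s_2)$ between covering subforests of $t$ paired with admissible cuts of $t/s$ on one side, and admissible cuts of $t$ paired with covering subforests of the pruning and of the trunk on the other. The compatibility identities $P^{c'}(t/s)=P^c(t)/s_1$ and $R^{c'}(t/s)=R^c(t)/s_2$ that you single out are exactly the point, and your justification (the lifted cut $c$ avoids all intra-component edges of $s$, hence contraction and cutting commute) is the right one.
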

\subsection{Comodule structure on the doubling of the rooted trees bialgebra}
We define $\phi : D \longrightarrow \tilde{D} \otimes D$ for all $(t,s) \in D$ by:
$$\phi (t,s) = \sum_{s'\subseteq s} (t , s') \otimes(t/s', s/s').$$
The map $\phi$ is well defined. Indeed, if $(t,s) \in D$ i.e: $s = P^{c_0} (t)$ for an admissible cut $c_0$ of $t$, we have: 
$$s \subseteq t \Longrightarrow s' \subseteq s \subseteq t \Longrightarrow (t, s') \in \tilde D,$$
and $s/s' = P^{c} (t/s')$, where $c$ is the admissible cut deduced from $c_0$. Therefore $(t/s', s/s') \in D$.  
\begin{theorem}
$D$ admits a comodule structure  on $\tilde{D}$ given by $\phi$. 
\end{theorem}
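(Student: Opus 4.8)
The plan is to verify directly the two axioms defining a left $\tilde D$-comodule structure on $D$ through the map $\phi$, namely the coaction compatibility
$$(\Gamma\otimes\mathrm{id}_D)\circ\phi=(\mathrm{id}_{\tilde D}\otimes\phi)\circ\phi,$$
and the counit condition $(\varepsilon\otimes\mathrm{id}_D)\circ\phi=\mathrm{id}_D$, under the identification $\mathbf 1\otimes D\cong D$. The well-definedness of $\phi$, i.e. the inclusion $\phi(D)\subseteq\tilde D\otimes D$, has already been established just before the statement, so only these two identities remain. The decisive observation that keeps the argument short is that $\phi$ is given by exactly the same formula $\sum_{s'\subseteq s}(t,s')\otimes(t/s',s/s')$ as the coproduct $\Gamma$ of $\tilde D$; only the source and target differ ($\phi\colon D\to\tilde D\otimes D$ versus $\Gamma\colon\tilde D\to\tilde D\otimes\tilde D$). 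Because of this, the coaction axiom reduces to a computation that has essentially already been carried out in the proof that $\tilde D$ is a bialgebra.

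For the coaction axiom I would expand both sides. Applying $\Gamma$ to the first leg of $\phi(t,s)$ gives
$$(\Gamma\otimes\mathrm{id}_D)\,\phi(t,s)=\sum_{s''\subseteq s'\subseteq s}(t,s'')\otimes(t/s'',s'/s'')\otimes(t/s',s/s'),$$
exactly as in the first half of the coassociativity computation for $\Gamma$. Writing the coaction once more with the outer index renamed to $s''$ and then applying $\phi$ to the second leg $(t/s'',s/s'')$ gives
$$(\mathrm{id}_{\tilde D}\otimes\phi)\,\phi(t,s)=\sum_{s''\subseteq s}\ \sum_{r\subseteq s/s''}(t,s'')\otimes(t/s'',r)\otimes\big((t/s'')/r,(s/s'')/r\big).$$
The two expressions are matched by the same reindexing used for $\Gamma$: for each $r\subseteq s/s''$ there is a unique subforest $s'$ with $s''\subseteq s'\subseteq s$ and $r\cong s'/s''$, and under this correspondence the contraction identities $(t/s'')/(s'/s'')=t/s'$ and $(s/s'')/(s'/s'')=s/s'$ turn the second sum into the first. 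The only point genuinely new relative to the bialgebra computation is that the rightmost tensor factor now stays inside $D$ rather than $\tilde D$; this is precisely the content of the well-definedness remark, while the underlying combinatorics of subforests and contractions is identical.

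For the counit axiom I would use that $\varepsilon(t,s')=\varepsilon(s')$ is the projection onto the degree-zero component of $\tilde D$, which is nonzero only on the trivial subforest $s'$ of $s$, i.e. the one fixed by contraction, for which $t/s'=t$ and $s/s'=s$. In the expansion $(\varepsilon\otimes\mathrm{id}_D)\,\phi(t,s)=\sum_{s'\subseteq s}\varepsilon(t,s')\,(t/s',s/s')$ only that single term survives, and it equals $(t,s)$, as required.

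The genuine content of the statement — and hence the only real obstacle — is the coassociativity bijection $r\leftrightarrow s'/s''$ between subforests of the contraction $s/s''$ and intermediate subforests $s''\subseteq s'\subseteq s$, together with the contraction identities it intertwines. Since this bijection and these identities were already established for $\Gamma$, the proof for $\phi$ is essentially a transcription; the one thing to be careful about is the bookkeeping that keeps each tensor leg in its correct space ($\tilde D$ or $D$), which follows from the admissible-cut description of the elements of $D$.
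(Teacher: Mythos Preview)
Your proof is correct and follows essentially the same route as the paper: both compute $(\Gamma\otimes\mathrm{id})\circ\phi$ and $(\mathrm{id}\otimes\phi)\circ\phi$ directly and match them via the reindexing $r\leftrightarrow s'/s''$ together with the contraction identities $(t/s'')/(s'/s'')=t/s'$ and $(s/s'')/(s'/s'')=s/s'$. You additionally check the counit axiom, which the paper's proof omits; this is a welcome bit of extra care but does not change the approach.
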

\begin{proof}
The proof amounts to show that the following diagram is commutative:
\diagramme{
\xymatrix{D \ar[rrr]^{\phi}\ar[d]_{\phi}
&&&\tilde D \otimes D\ar[d]^{\Gamma \otimes id}\\
\tilde D \otimes D
\ar[rrr]_{id \otimes \phi}&&&\tilde D \otimes \tilde D\otimes D
}
}
Let $(t,s) \in D$:
\begin{eqnarray*}
(\Gamma \otimes id)\circ \phi (t,s) &=& (\Gamma \otimes id) \big(\sum_{s'\subseteq s} (t , s') \otimes(t/s', s/s')\big)\\
&=& \sum_{s'' \subseteq s'\subseteq s }  (t , s'') \otimes (t/s'' , s'/s'') \otimes (t/s', s/s').
\end{eqnarray*}
On the other hand, we have:
\begin{eqnarray*}
(id \otimes \phi)\circ \phi (t,s) &=& (id \otimes \phi) \big(\sum_{s''\subseteq s} (t , s'') \otimes(t/s'', s/s'')\big)\\
&=& \sum_{s'\subseteq s \atop \tilde s'\subseteq s/s''} (t , s'') \otimes (t/s'' , \tilde s') \otimes ((t/s'')/\tilde s', (s/s'')/\tilde s')\\
&=& \sum_{s'' \subseteq s'\subseteq s}  (t , s'') \otimes (t/s'' , s'/s'') \otimes (t/s', s/s').
\end{eqnarray*}
Then : $$(\Gamma \otimes id)\circ \phi  = (id \otimes \phi)\circ \phi,$$
and consequently $\phi$ is a coaction.
\end{proof}
\begin{remark}
We note here that $\phi (V) \subset \tilde V \otimes V$.   
\end{remark}

\section{Structures of associative algebras on the doubling spaces}
\subsection{Associative Product  on $V$}
Recall here that an element $(t, s)$ belongs to $V$ if $t$ is a tree and $s = P^c (t)$ i.e. $s$ is the pruning of the tree $t$ for an admissible cut $c$.
\begin{theorem}
Let $(t,s)$ and $(t',s')$ be two couples of forests such that $s = P^{c} (t)$ and $s' = P^{c'} (t')$. The product $\circledast : V \otimes V \longrightarrow V$ is defined by:
\begin{equation}
(t,s) \circledast (t',s') = \left\lbrace
\begin{array}{lcl}
(t,s \cup s')  \;\;\;\;\; \text{if} \;\; t' = R^c (t)\\
0  \;\; \;\; \;\; \;\; \text{if not},
\end{array}\right. 
\end{equation}
where $s \cup s'$ is the pruning of the cut $c'$ raised to the tree $t$, is associative.
\end{theorem}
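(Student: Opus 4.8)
The plan is to deduce the associativity of $\circledast$ from the coassociativity of $\Delta$ already established above, using the fact (announced before the statement) that $\circledast$ is the transpose of the restriction $\Delta|_V : V \to V\otimes V$. Concretely, I equip $V$ with the symmetric bilinear form $\langle\,\cdot\,,\cdot\,\rangle$ for which the family $\mathcal B = \{(t,P^c(t)) : t \text{ a rooted tree},\ c\in\mop{Adm}(t)\}$ is orthonormal; this form is nondegenerate since $\mathcal B$ is a basis of $V$ (it is the honest replacement for the graded-dual pairing, whose naive form is awkward here because the homogeneous components of $V$ are infinite dimensional). First I would record the two bookkeeping facts that make the codomain and the grading correct: if $s = P^c(t)$, $s' = P^{c'}(t')$ and $t' = R^c(t)$, then the cut $c'$ raised to $t$ is again an admissible cut of $t$ (a root-to-leaf path meets it at most once, since below the former cut $c$ it meets $c'$ at most once and above $c$ not at all), so $(t, s\cup s')=(t,P^{c'}(t))\in V$; moreover $|s\cup s'| = |s| + |s'|$, so $\circledast$ is graded.

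The heart of the argument is the single combinatorial identity
\begin{equation}\label{dualpair}
\big\langle (t,s)\circledast(t',s'),\,(u,v)\big\rangle = \big\langle (t,s)\otimes(t',s'),\,\Delta(u,v)\big\rangle
\end{equation}
for all basis elements $(t,s),(t',s'),(u,v)$. To prove it I expand the right-hand side using $\Delta(u,v) = \sum_{\gamma\in\mop{Adm}(v)} (u,P^\gamma(v))\otimes(R^\gamma(u),R^\gamma(v))$ and orthonormality of $\mathcal B$: the pairing counts the admissible cuts $\gamma$ of $v$ with $u=t$, $P^\gamma(v)=s$, $R^\gamma(v)=s'$ and $R^\gamma(t)=t'$. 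I would then check that such a $\gamma$ exists (and is then unique, being determined by the requirement $P^\gamma(v)=s$) exactly when $t' = R^c(t)$ and $(u,v) = (t, s\cup s')$: given these data, $v = s\cup s' = P^{c'}(t)$ carries $s$ as its top part and $s'$ as its bottom part, and the required cut is $\gamma = c$ raised into $v$, which satisfies $P^\gamma(v)=s$, $R^\gamma(v)=s'$ and $R^\gamma(t)=R^c(t)=t'$; conversely such a $\gamma$ forces $u=t$, $t'=R^c(t)$ and reconstructs $v$ as $s\cup s'$. This matches exactly the defining formula of $\circledast$, giving \eqref{dualpair}. This verification is the main obstacle: one must argue carefully that forming $s\cup s'$ by raising the cut $c'$ to $t$ is inverse to slicing $v$ along $\gamma$, i.e. that the cuts of $v=s\cup s'$ separating $s$ from $s'$ correspond precisely to the cut $c$ of $t$.

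With \eqref{dualpair} in hand the conclusion is formal. For basis elements $x,y,z$ the element $x\circledast y$ is again a single basis element or $0$, so for every basis element $w$,
\begin{align*}
\big\langle (x\circledast y)\circledast z,\,w\big\rangle
&= \big\langle (x\circledast y)\otimes z,\,\Delta w\big\rangle
= \big\langle x\otimes y\otimes z,\,(\Delta\otimes\mop{id})\Delta w\big\rangle,\\
\big\langle x\circledast(y\circledast z),\,w\big\rangle
&= \big\langle x\otimes(y\circledast z),\,\Delta w\big\rangle
= \big\langle x\otimes y\otimes z,\,(\mop{id}\otimes\Delta)\Delta w\big\rangle,
\end{align*}
where the first equality in each line is \eqref{dualpair} applied to the outer product, the second applies it again inside the finite expansion of $\Delta w$ (every sum is finite because $w$ has finitely many admissible cuts and each intermediate product is a single basis element). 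By the coassociativity $(\Delta\otimes\mop{id})\Delta = (\mop{id}\otimes\Delta)\Delta$ proved above, the two right-hand sides coincide for all $w$; nondegeneracy of $\langle\,\cdot\,,\cdot\,\rangle$ then yields $(x\circledast y)\circledast z = x\circledast(y\circledast z)$ on basis elements, hence on all of $V$ by bilinearity. A purely combinatorial alternative is available as well --- compute both triple products directly and observe that each is nonzero exactly under the nested conditions $t'=R^c(t)$ and $t''=R^{c'}(t')$, with common value $(t,\,s\cup s'\cup s'')$ obtained by raising $c''$ all the way to $t$ --- but the transpose argument isolates the one nontrivial fact \eqref{dualpair} and reuses coassociativity rather than re-deriving it.
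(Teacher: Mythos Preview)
Your proof is correct and takes a genuinely different route from the paper. The paper proceeds by the direct computation you sketch in your final sentence: it takes three basis elements $(t,s),(t',s'),(t'',s'')$ and evaluates both $\big((t,s)\circledast(t',s')\big)\circledast(t'',s'')$ and $(t,s)\circledast\big((t',s')\circledast(t'',s'')\big)$ by unfolding the definition twice, finding that each equals $(t,\,s\cup s'\cup s'')$ precisely under the nested conditions $t'=R^c(t)$ and $t''=R^{c'}(t')$, and vanishes otherwise. Your approach instead realises $\circledast$ as the transpose of $\Delta|_V$ via the orthonormal pairing and deduces associativity from the coassociativity of $\Delta$ already established in Theorem~1. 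The duality argument is more conceptual and makes precise the link the paper only announces in the introduction (``this product is obtained by dualizing the restriction of the coproduct $\Delta$ to $V$''); the paper's direct computation is shorter, entirely self-contained, and sidesteps the infinite-dimensionality issue you had to address by working with finite sums and nondegeneracy rather than an honest graded-dual identification.
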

\begin{proof}
Let $(t,s), (t',s')$ and $(t'',s'')$ be a three elements of $V$ i.e, there exist $c \in {\mop{Adm}(t)}, c' \in {\mop{Adm}(t')}$ and $c'' \in {\mop{Adm}(t'')}$ such that : $s = P^{c} (t), s' = P^{c'} (t')$ and $t'' = P^{c''} (t'')$.\\
We suppose firstly that $t' = R^c (t)$, otherwise the result is zero.
\begin{eqnarray*}
\big((t,s) \circledast (t',s')\big)\circledast (t'',s'') &=& (t,\tilde s' ) \circledast (t'',s'')\\
&=& (t,s \cup s' \cup s''), 
\end{eqnarray*}
where $\tilde s' = s \cup s'$ and $t'' = R^{c_{\tilde s'}} (t) = R^{c'} (R^{c}(t)) = R^{c'} (t')$. Then:
\begin{eqnarray*}
\big((t,s) \circledast (t',s')\big)\circledast (t'',s'') = \left\lbrace
\begin{array}{lcl}
(t,s \cup s' \cup s'')  \;\;\;\;\;  \text{if} \;\; t' = R^c (t) \;\text{and}\; t'' = R^{c'} (t')\\
0  \;\; \;\; \;\; \;\; \text{if not}.
\end{array}\right. 
\end{eqnarray*}
Otherwise, for $t'' = R^{c'} (t')$ we have: 
\begin{eqnarray*}
(t,s) \circledast \big( (t',s')\circledast (t'',s'')\big) &=& (t,s) \circledast (t', \tilde s'')\\
&=& (t,s \cup s' \cup s''), 
\end{eqnarray*}
where $\tilde s'' = s' \cup s''$ and $t' = R^{c} (t)$. Therefore:
\begin{eqnarray*}
(t,s) \circledast \big( (t',s')\circledast (t'',s'')\big) = \left\lbrace
\begin{array}{lcl}
(t,s \cup s' \cup s'')  \;\;\;\;\;  \text{if} \;\; t' = R^c (t)\; \text{and}\; t'' = R^{c'} (t')\\
0  \;\; \;\; \;\; \;\; \text{if not}.
\end{array}\right. 
\end{eqnarray*}
We therefore conclude that for all $(t,s), (t',s')$ and $(t'',s'')$ in $V$ we have:
$$\big((t,s) \circledast (t',s')\big)\circledast (t'',s'') = (t,s) \circledast \big( (t',s')\circledast (t'',s'')\big),$$
which proves the associativity  of the product $\circledast$.
\end{proof}

\subsection{Associative product on $\tilde V$}
Recall here that an element $(t, s)$ belongs to $\tilde V$ if $s$ is a subforest of the tree $t$.
\begin{theorem}\label{th5}
The product $ \sharp : \tilde V \otimes \tilde V \longrightarrow \tilde V$ defined by:
\begin{equation}
(t,s) \sharp (t',s') = \left\lbrace
\begin{array}{lcl}
(t,s \cup s')  \;\;\;\;\; \text{if} \;\; t' = t/s\\
0  \;\; \;\; \;\; \;\; \text{if not}
\end{array}\right. 
\end{equation}
is associative.
\end{theorem}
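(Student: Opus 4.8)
The plan is to mirror the proof of associativity of $\circledast$, while exploiting the fact that $\sharp$ is by construction the transpose of the coproduct $\Gamma$ restricted to $\tilde V$. Concretely, with respect to the basis $\{(t,s)\}$ of $\tilde V$, one checks that
$$\langle (t,s)\sharp(t',s'),\,(u,w)\rangle = \langle (t,s)\otimes(t',s'),\,\Gamma(u,w)\rangle$$
for all basis elements: the left-hand side is nonzero exactly when $u=t$, $t'=t/s$ and $w=s\cup s'$, which is precisely the single term of $\Gamma(u,w)=\sum_{w'\subseteq w}(u,w')\otimes(u/w',w/w')$ with $w'=s$. This identification explains conceptually why associativity of $\sharp$ should follow from coassociativity of $\Gamma$, already established for $\tilde D$. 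Since the grading by $|s|$ does not make the homogeneous components finite dimensional, I would not rest the proof on this transposition alone, but use it to guide and cross-check the direct computation below.

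For the direct argument, I would take $(t,s),(t',s'),(t'',s'')\in\tilde V$ and compute both bracketings. On the left, $(t,s)\sharp(t',s')$ vanishes unless $t'=t/s$, in which case it equals $(t,s\cup s')$, where $s\cup s'\subseteq t$ is the subforest obtained by lifting $s'\subseteq t/s$ back to $t$ together with $s$; then applying $\sharp(t'',s'')$ vanishes unless $t''=t/(s\cup s')$ and otherwise gives $(t,s\cup s'\cup s'')$. On the right, $(t',s')\sharp(t'',s'')$ vanishes unless $t''=t'/s'$, giving $(t',s'\cup s'')$, and $(t,s)\sharp(t',s'\cup s'')$ then vanishes unless $t'=t/s$, giving $(t,s\cup(s'\cup s''))$.

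The two bracketings therefore agree as soon as I verify the contraction–lift compatibility lemma: for $s\subseteq t$ and $s'\subseteq t/s$ one has $(t/s)/s'\cong t/(s\cup s')$, and the lift is associative, $(s\cup s')\cup s''=s\cup(s'\cup s'')$ inside $t$. Granting this, the non-vanishing conditions coincide, namely $t'=t/s$ together with $t''=t'/s'=(t/s)/s'=t/(s\cup s')$, and both results equal $(t,s\cup s'\cup s'')$. I expect this lemma to be the only real obstacle: it amounts to making the informal notation $s\cup s'$ precise and to checking that successive contractions of disjoint covering subforests compose. This is exactly the combinatorial fact already invoked in the coassociativity proof of $\Gamma$, the bijection between chains $s''\subseteq s'\subseteq s$ of nested subforests and pairs consisting of a subforest and a subforest of its contraction; once it is in place the associativity of $\sharp$ is immediate.
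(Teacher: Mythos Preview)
Your proposal is correct and follows essentially the same direct verification as the paper: compute both bracketings, observe that each is nonzero precisely when $t'=t/s$ and $t''=t'/s'$, and that both then yield $(t,s\cup s'\cup s'')$. The paper's proof is terser---it invokes $t/(s\cup s')=(t/s)/s'$ without isolating it as a lemma---whereas you make this contraction--lift compatibility explicit and add the duality-with-$\Gamma$ heuristic (which the paper mentions only in the introduction); but the argumentative core is the same.
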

\begin{proof}
Let $(t,s), (t',s')$ and $(t'',s'')$ three elements of $\tilde V$, i.e, $s \subseteq t, s' \subseteq t' $ and $s'' \subseteq t''$\\
We suppose firstly that $t' = t/s$, if not, the result is zero.
\begin{eqnarray*}
\big((t,s) \sharp (t',s')\big)\sharp (t'',s'') &=& (t,\tilde s') \sharp (t'',s'')\\
&=& (t,s \cup s' \cup s''), 
\end{eqnarray*}
where $\tilde s' = s \cup s'$ and $t'' = t/\tilde s' = (t/s)/s' = t'/s'$. Therefore:
\begin{eqnarray*}
\big((t,s) \sharp (t',s')\big)\sharp (t'',s'') = \left\lbrace
\begin{array}{lcl}
(t,s \cup s' \cup s'')  \;\;\;\;\;  \text{if} \;\; t' = t/s \;\text{and}\;\; t'' = t'/s'\\
0  \;\; \;\; \;\; \;\; \text{if not}.
\end{array}\right. 
\end{eqnarray*}
Otherwise, for $t'' = t'/s'$ we have: 
\begin{eqnarray*}
(t,s) \sharp \big( (t',s')\sharp (t'',s'')\big) &=& (t,s) \sharp (t', \tilde s'')\\
&=& (t,s \cup s' \cup s''), 
\end{eqnarray*}
where $\tilde s'' = s' \cup s''$ and $t' = t'/s'$. Therefore:
\begin{eqnarray*}
(t,s) \sharp \big( (t',s')\sharp (t'',s'')\big) = \left\lbrace
\begin{array}{lcl}
(t,s \cup s' \cup s'')  \;\;\;\;\;  \text{if} \;\; t' = t/s \;\;\;\text{and}\;\;\; t'' = t'/s'\\
0  \;\; \;\; \;\; \;\; \text{if not}.
\end{array}\right. 
\end{eqnarray*}
We conclude that for all $(t,s), (t',s')$ and $(t'',s'')$ in $D$ we have:
$$\big((t,s) \sharp (t',s')\big) \sharp (t'',s'') = (t,s) \sharp \big( (t',s')\sharp (t'',s'')\big),$$
which proves the associativity of the product $\sharp$.
\end{proof}

\section{Relations between the laws on $V$ and $\tilde V$}
\begin{theorem}
The map: $ \psi : \tilde V \otimes  V  \longrightarrow  V$ defined by:
\begin{equation}
\psi \big((t,s) \otimes (u, P^c (u))\big) = \left\lbrace
\begin{array}{lcl}
(t,  P^{\tilde c}(t))  \;\;\;\;\; \text{if} \;\; u = t/s\\
0  \;\; \;\; \;\; \;\; \text{if not},
\end{array}\right. 
\end{equation}
where ${\tilde c}$ is the raising of $c$ to $t$, is an action of $\tilde V$ on $V$.
\end{theorem}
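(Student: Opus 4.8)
The plan is to verify directly the single axiom defining a left action of the associative algebra $(\tilde V,\sharp)$ on $V$, namely that the two composites $\psi\circ(\sharp\otimes\mathrm{id})$ and $\psi\circ(\mathrm{id}\otimes\psi)$ agree as maps $\tilde V\otimes\tilde V\otimes V\longrightarrow V$. So I would fix a general generator $(t,s)\otimes(t',s')\otimes(u,P^c(u))$, with $(t,s),(t',s')\in\tilde V$ and $(u,P^c(u))\in V$ (here $c\in\mop{Adm}(u)$), and compute each side, keeping careful track of when the result is forced to vanish.

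First I would compute the left-hand side. By Theorem~\ref{th5} the product $(t,s)\sharp(t',s')$ equals $(t,s\cup s')$ when $t'=t/s$ and $0$ otherwise; applying $\psi$ then yields $\big(t,P^{\tilde c}(t)\big)$ provided in addition $u=t/(s\cup s')$, with $\tilde c$ the raising of $c$ to $t$ through the contracted forest $s\cup s'$. Using the contraction identity $t/(s\cup s')=(t/s)/s'$ already exploited in the proof of Theorem~\ref{th5}, this second condition is exactly $u=t'/s'$. Hence the left-hand side is nonzero precisely when $t'=t/s$ and $u=t'/s'$, and then it equals $\big(t,P^{\tilde c}(t)\big)$.

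Next I would compute the right-hand side. The inner application $\psi\big((t',s')\otimes(u,P^c(u))\big)$ gives $\big(t',P^{c_1}(t')\big)$ when $u=t'/s'$ (and $0$ otherwise), where $c_1$ is the raising of $c$ to $t'$ through $s'$; feeding this into $\psi(\,(t,s)\otimes-\,)$ then gives $\big(t,P^{c_2}(t)\big)$ when $t'=t/s$, where $c_2$ is the raising of $c_1$ to $t$ through $s$. Thus the right-hand side is nonzero under the same two conditions $t'=t/s$ and $u=t'/s'$, so it remains only to check that the two prunings coincide, i.e. that $c_2=\tilde c$.

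This last identification is the crux of the argument and the step I expect to carry the real content: it says that raising is transitive along a tower of contractions. Concretely, the edges of $u$ are identified with the edges of $t$ lying neither in $s$ nor in $s'$, and this identification may be performed either in one step through $u=t/(s\cup s')$ or in two steps through $t'=t/s$ followed by $u=t'/s'$; since $t/(s\cup s')=(t/s)/s'$, the two edge identifications agree, so the image of the edge-set $c$ is the same set $\tilde c=c_2$ of edges of $t$ in both cases. Granting this compatibility, both composites agree on every generator, which establishes that $\psi$ is an action of $\tilde V$ on $V$.
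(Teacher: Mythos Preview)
Your proof is correct and follows essentially the same route as the paper: both compute $\psi\circ(\sharp\otimes\mathrm{id})$ and $\psi\circ(\mathrm{id}\otimes\psi)$ on a generic generator $(t,s)\otimes(t',s')\otimes(u,P^c(u))$, identify the common nonvanishing conditions $t'=t/s$ and $u=t'/s'$ (using $t/(s\cup s')=(t/s)/s'$), and conclude by the transitivity of raising cuts along successive contractions. Your version is slightly more explicit in isolating this transitivity as the crux, whereas the paper records it in passing, but the argument is the same.
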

\begin{proof}
We have to verify the commutativity of this diagram:
\diagramme{
\xymatrix{\tilde V \otimes \tilde V\otimes V \ar[rrr]^{id \otimes \psi}\ar[d]_{\sharp \otimes id}
&&&\tilde V \otimes V\ar[d]^{\psi}\\
\tilde V \otimes V
\ar[rrr]_{\psi }&&& V
}
}
Let $(t,s)$ and $(t',s')$ be two elements of $\tilde V$, and $(u, P^c (u)) \in V$. We have:
\begin{eqnarray*}
(id \otimes \psi)\big((t,s)\otimes(t',s')\otimes (u, P^c (u))\big)  = \left\lbrace
\begin{array}{lcl}
(t,s)\otimes (t', P^{\bar c} (t')) \;\;  \text{if} \;\; u = t'/s'\\
0  \;\; \;\; \;\; \;\; \text{if not},
\end{array}\right. 
\end{eqnarray*}
 where ${\bar c}$ is the raising of  $c$ to $t'$. Then:
\begin{eqnarray*}
\psi \circ (id \otimes \psi)\big((t,s)\otimes (t',s')\otimes (u, P^c (u))\big) = \left\lbrace
\begin{array}{lcl}
\big(t, P^{\tilde {c}} (t)\big) \;\;  \text{if} \;\; t' = t/s \;\;\text{and} \;\; u = t'/s' \\
0  \;\; \;\; \;\; \;\; \text{if not},
\end{array}\right. 
\end{eqnarray*}
 where ${\tilde c}$ is the raising of $\bar c$ to $t$, i.e: ${\tilde c}$ is the raising of $c$ to $t$. Therefore:
\begin{eqnarray*}
\psi \circ (id \otimes \psi)\big((t,s)\otimes (t',s')\otimes (u, P^c (u))\big) = \left\lbrace
\begin{array}{lcl}
\big(t, P^{\tilde {c}} (t)\big) \;\;  \text{if} \;\; t' = t/s \;\;\text{and} \;\; u = t'/s'\\
0  \;\; \;\; \;\; \;\; \text{if not},
\end{array}\right. 
\end{eqnarray*}
where ${\tilde c}$ is the raising of $c$ to $t$.

On the other hand, we have:
\begin{eqnarray*}
(\sharp \otimes id)\big((t,s)\otimes (t',s')\otimes (u, P^c (u))\big)  = \left\lbrace
\begin{array}{lcl}
(t, s \cup s')\otimes (u, P^c (u)) \;\;  \text{if} \;\; t' = t/s\\
0  \;\; \;\; \;\; \;\; \text{if not}.
\end{array}\right.
\end{eqnarray*}
 Then:
\begin{eqnarray*}
\psi \circ (\sharp \otimes id)\big((t,s)\otimes (t',s')\otimes (u, P^c (u))\big)  = \left\lbrace
\begin{array}{lcl}
\big(t, P^{\tilde {c}} (t)\big) \;\;  \text{if} \;\; t' = t/s ,\;\;\;  u = t/(s \cup s')  = t'/s' \\
0  \;\; \;\; \;\; \;\; \text{if not},
\end{array}\right. 
\end{eqnarray*}
where ${\tilde c}$ is the raising of $c$ to $t$. We conclude then:
$$\psi \circ (id \otimes \psi) = \psi \circ (\sharp \otimes id),$$
which proves that $\psi$ is an action of $\tilde V$ on $V$.
\end{proof}
\begin{theorem}
The following diagram is commutative:
\diagramme{
\xymatrix{D \ar[rrr]^{\phi}\ar[d]_{\Delta}
&&&\tilde D \otimes D\ar[d]^{id\otimes \Delta}\\
D \otimes D
\ar[rrr]_{\phi \otimes id}&&&\tilde D \otimes D\otimes D
}
}
i.e:
\begin{equation}
(\phi\otimes id)\circ \Delta = (id\otimes \Delta)\circ \phi.
\end{equation}
Therefore, $\Delta$ is a comodule morphism from $(D, \Phi)$ to $(D\otimes D, \Phi\otimes id)$.
\end{theorem}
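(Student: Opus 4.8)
The plan is to prove the identity $(\phi\otimes id)\circ\Delta = (id\otimes\Delta)\circ\phi$ by expanding both composites into explicit iterated sums indexed by pairs (admissible cut, subforest) and then exhibiting a bijection between the two index sets, exactly as in the proofs of coassociativity of $\Delta$ and $\Gamma$ and of the coaction $\phi$. Since $\Delta$, $\phi$ and the multiplications are all algebra morphisms and $D=S(V)$, $\tilde D=S(\tilde V)$, it suffices to check the identity on a generator $(t,s)\in V$; the general case then follows multiplicatively, which is precisely the extension to $D$ announced in the introduction.

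First I would expand the left-hand path. Starting from $\Delta(t,s)=\sum_{c\in\mop{Adm}(s)}(t,P^c(s))\otimes(R^c(t),R^c(s))$ and applying $\phi$ to the first tensor factor gives
\begin{equation*}
(\phi\otimes id)\circ\Delta(t,s)=\sum_{c\in\mop{Adm}(s)}\ \sum_{s'\subseteq P^c(s)}(t,s')\otimes\big(t/s',P^c(s)/s'\big)\otimes\big(R^c(t),R^c(s)\big).
\end{equation*}
For the right-hand path, starting from $\phi(t,s)=\sum_{s'\subseteq s}(t,s')\otimes(t/s',s/s')$ and applying $\Delta$ to the second tensor factor gives
\begin{equation*}
(id\otimes\Delta)\circ\phi(t,s)=\sum_{s'\subseteq s}\ \sum_{c'\in\mop{Adm}(s/s')}(t,s')\otimes\big(t/s',P^{c'}(s/s')\big)\otimes\big(R^{c'}(t/s'),R^{c'}(s/s')\big).
\end{equation*}
In both expressions the first tensor factor is $(t,s')$, so the problem reduces to matching the data attached to a fixed subforest $s'$.

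The core of the argument is a bijection between the pairs $\{(c,s'):c\in\mop{Adm}(s),\,s'\subseteq P^c(s)\}$ and the pairs $\{(s',c'):s'\subseteq s,\,c'\in\mop{Adm}(s/s')\}$. Given a cut $c$ of $s$ and a subforest $s'$ lying above it, contraction of $s'$ leaves each edge of $c$ intact, so $c$ descends to a cut $c'$ of $s/s'$; conversely a cut $c'$ of $s/s'$ is raised to a cut $c$ of $s$ by the same raising operation already used to define $\psi$ and $\xi$. Under this correspondence I would verify the three commutation relations
\begin{equation*}
P^c(s)/s'=P^{c'}(s/s'),\qquad R^c(t)=R^{c'}(t/s'),\qquad R^c(s)=R^{c'}(s/s'),
\end{equation*}
which assert that pruning/trunk and contraction commute once $s'$ sits above the cut. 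These identities force the summands to agree term by term and thus equate the two paths.

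The step I expect to be the main obstacle is showing that this correspondence really is a bijection of the index sets, not merely an injection. The delicate point is the placement of $s'$ relative to the raised cut: a cut $c'$ of $s/s'$ may separate the quotient so that a contracted component of $s'$ falls on the trunk side, and one must check that the raising operation together with the above-the-cut condition $s'\subseteq P^c(s)$ accounts for exactly these configurations (in particular the interaction with the root, which is pruned only by the total cut). I would handle this by reading the raised cut off the decomposition of $s'$ into its parts above and below the cut, mirroring the comodule computation $(\Gamma\otimes id)\circ\phi=(id\otimes\phi)\circ\phi$, and by invoking the unit conventions (the empty pruning as unit of $D$ and isolated-vertex forests as unit of $\tilde D$, exactly as for the coaction $\Phi$ of \cite{ckm}) to absorb the boundary contributions. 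Once the bijection is secured, the coassociativity-style reindexing closes the proof on $V$, and multiplicativity propagates the commuting square to all of $D$.
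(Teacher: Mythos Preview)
Your proposal follows essentially the same route as the paper's own proof: expand both composites explicitly, then match them term by term via the correspondence between pairs $(c,s')$ with $c\in\mop{Adm}(s)$, $s'\subseteq P^c(s)$ and pairs $(s',c')$ with $s'\subseteq s$, $c'\in\mop{Adm}(s/s')$, invoking exactly the three commutation identities $P^c(s)/s'=P^{c'}(s/s')$, $R^c(t)=R^{c'}(t/s')$, $R^c(s)=R^{c'}(s/s')$ (the paper states the same identities with the labels $c$ and $c'$ interchanged). The paper simply asserts the equivalence of the two index sets and these identities without further comment, so your explicit discussion of how the components of $s'$ sit relative to the raised cut is a clarification of the same argument rather than a different approach.
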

\begin{proof}
We have:
\begin{eqnarray*}
(id\otimes \Delta)\circ \phi (t,s) &=& (id\otimes \Delta) \big(\sum_{s'\subseteq s} (t , s') \otimes(t/s', s/s')\big)\\
&=& \sum_{s'\subseteq s \atop c \in {\mop{\tiny{Adm}}(s/s')}} (t , s') \otimes \big(t/s' , P^{c} (s/s')\big) \otimes \big( R^{c}(t/s'), R^{c}(s/s')\big).
\end{eqnarray*}
On the other hand, we have:
\begin{eqnarray*}
(\phi\otimes id)\circ \Delta (t,s) &=& (\phi\otimes id) \Big(\sum_{c' \in {\mop{\tiny{Adm}}(s)}}  \big(t, P^{c'}(s)\big)\otimes \big( R^{c'}(t), R^{c'}(s)\big)\Big) \\
&=& \sum_{c' \in {\mop{\tiny{Adm}}(s)} \atop s'\subseteq P^{c'}(s)} (t, s')\otimes \big(t/s' , P^{c'} (s) / s'\big) \otimes \big( R^{c'}(t), R^{c'}(s)\big).
\end{eqnarray*}
The conditions $\{ s'\subseteq P^{c'}(s)\; \text{and}\; c \in {\mop{Adm}(s/s')}\}$ and $\{c' \in {\mop{Adm}(s)}\;\text{and}\; s'\subseteq P^{c'}(s)\}$ are equivalent, where $c'$ is the raising of $c$. We then obtain the following equalities:\\
$P^{c'} (s) / s' = P^{c} (s/ s') \;;\; R^{c'}(t) = R^{c}(t/s')  \;\; and \;\; R^{c'}(s) = R^{c}(s/s')$, Which gives: 
\begin{eqnarray*}
(\phi\otimes id)\circ \Delta (t,s) &=& \sum_{c' \in {\mop{\tiny{Adm}}(s)} \atop s'\subseteq P^{c'}(s)} (t, s')\otimes \big(t/s' , P^{c'} (s) / s'\big) \otimes \big( R^{c'}(t), R^{c'}(s)\big) \\
&=& \sum_{s'\subseteq s \atop c \in {\mop{\tiny{Adm}}(s/s')}} (t , s') \otimes \big(t/s' , P^{c} (s/s')\big) \otimes \big( R^{c}(t/s'), R^{c}(s/s')\big).
\end{eqnarray*}
Therefore: $$(\phi\otimes id)\circ \Delta = (id\otimes \Delta)\circ \phi,$$
which proves the commutativity of the diagram.
\end{proof}
\begin{theorem}
The map $\psi$ verifies the following commutative diagram:
\diagramme{
\xymatrix{\tilde V \otimes  V\otimes V \ar[rrr]^{id \otimes \circledast}\ar[d]_{\psi \otimes id}
&&&\tilde V \otimes  V\ar[d]^{\psi}\\
V \otimes V
\ar[rrr]_{\circledast}&&&  V
}
}
i.e :
\begin{equation}
\psi \circ (id \otimes \circledast) = \circledast \circ (\psi \otimes id).
\end{equation}
\end{theorem}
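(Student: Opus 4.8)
The plan is to chase the diagram directly on a general homogeneous tensor $(t,s)\otimes(u,P^{c_1}(u))\otimes(w,P^{c_2}(w))$ with $(t,s)\in\tilde V$ and $(u,P^{c_1}(u)),(w,P^{c_2}(w))\in V$, computing the two composites and checking that they have the same support and the same value wherever they are nonzero. Since each of $\psi$ and $\circledast$ is defined by a single ``nonzero case'' condition, the computation splits cleanly into matching the two support conditions and then matching the two output prunings.

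First I would compute the right-hand side $\circledast\circ(\psi\otimes id)$. Applying $\psi\otimes id$ yields $0$ unless $u=t/s$, in which case it produces $(t,P^{\tilde{c}_1}(t))\otimes(w,P^{c_2}(w))$, where $\tilde{c}_1$ is the raising of $c_1$ to $t$. Applying $\circledast$ next gives $0$ unless $w=R^{\tilde{c}_1}(t)$, and otherwise the result is $(t,P^{\tilde{c}_1}(t)\cup P^{c_2}(w))$; by the very definition of $\circledast$ this union is $P^{\widehat{c_2}}(t)$, where $\widehat{c_2}$ denotes $c_2$ raised from the trunk $w$ directly to $t$. I would then compute the left-hand side $\psi\circ(id\otimes\circledast)$. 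The inner $\circledast$ vanishes unless $w=R^{c_1}(u)$, in which case $(u,P^{c_1}(u))\circledast(w,P^{c_2}(w))=(u,P^{\hat{c}}(u))$ with $\hat{c}$ the raising of $c_2$ from $w$ to $u$; then $\psi$ gives $0$ unless $u=t/s$, and otherwise $(t,P^{\tilde{\hat{c}}}(t))$, where $\tilde{\hat{c}}$ is the raising of $\hat{c}$ from $u=t/s$ to $t$.

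To match the supports, I would assume $u=t/s$ and invoke the identity $R^{\tilde{c}_1}(t)=R^{c_1}(t/s)=R^{c_1}(u)$, which is exactly one of the equalities established in the preceding theorem (with the contracted subforest taken to be $s$). This shows that the right-hand condition $w=R^{\tilde{c}_1}(t)$ coincides with the left-hand condition $w=R^{c_1}(u)$, so both composites are supported precisely on the locus $\{u=t/s\text{ and }w=R^{c_1}(u)\}$. On that locus both outputs are of the form $(t,P^{?}(t))$, and the remaining task is to prove $P^{\widehat{c_2}}(t)=P^{\tilde{\hat{c}}}(t)$, i.e. that raising $c_2$ directly from $w$ to $t$ agrees with raising it first from $w$ to $u=t/s$ and then from $u$ to $t$.

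The hard part will be this transitivity of the raising operation. My argument would be edge-theoretic: the cut $c_2$ is a set of edges of $w$, and since a pruning $P^{c}(t)$ is entirely determined by the set of edges lying above $c$, it suffices to show that the two routes carry the edges of $c_2$ to the same subset of edges of $t$. Here $w$ sits inside $t$ in two compatible ways — as the trunk $R^{\tilde{c}_1}(t)$ and as $R^{c_1}(u)$ followed by the edge-preserving identification of the non-contracted edges of $u=t/s$ with edges of $t$ — and I would check that the composite $w\hookrightarrow u=t/s\hookrightarrow t$ and the direct inclusion $w=R^{\tilde{c}_1}(t)\hookrightarrow t$ realize the same set of edges of $t$. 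Granting this functoriality, $\widehat{c_2}$ and $\tilde{\hat{c}}$ are equal as collections of edges of $t$, so the prunings agree and the two outputs coincide. Combining the support comparison with this value comparison gives $\psi\circ(id\otimes\circledast)=\circledast\circ(\psi\otimes id)$, as required.
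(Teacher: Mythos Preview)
Your proposal is correct and follows essentially the same route as the paper's proof: both compute each composite on a generic tensor $(t,s)\otimes(u,P^{c}(u))\otimes(u',P^{c'}(u'))$, match the nonvanishing conditions via the identity $R^{\tilde c}(t)=R^{c}(t/s)$, and then identify the two output prunings by transitivity of the raising operation. Your edge-theoretic justification of this transitivity is in fact more explicit than the paper, which simply asserts that ``the cut $\tilde c'$ can be also seen as raising of $c'$ to $t$''.
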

\begin{proof}
Let $(u, P^c (u))$, $(u', P^{c'} (u'))$ be two elements of $V$ and $(t,s) \in \tilde V$, we have:
\begin{eqnarray*}
(id \otimes \circledast) \big((t,s)  \otimes(u, P^c (u))\otimes (u', P^{c'} (u'))\big) &=& \left\lbrace
\begin{array}{lcl}
(t,s)  \otimes \big(u, P^{c} (u) \cup P^{c'} (u')\big) \;\;  \text{if} \;\; u' = R^c (u)\\
0  \;\; \;\; \;\; \;\; \text{if not},
\end{array}\right.
\end{eqnarray*}
Then:
\begin{eqnarray*}
(id \otimes \circledast) \big((t,s)  \otimes(u, P^c (u))\otimes (u', P^{c'} (u'))\big)
&=& \left\lbrace\begin{array}{lcl}
(t,s)  \otimes \big(u, P^{\bar c'} (u) \big) \;\;  \text{if} \;\; u' = R^c (u)\\
0  \;\; \;\; \;\; \;\; \text{if not},
\end{array}\right.
\end{eqnarray*}
where ${\bar c'}$ is the raising of $c'$ to $u$. Then:
\begin{eqnarray*}
\psi \circ (id \otimes \circledast) \big((t, s) \otimes (u, P^c (u))\otimes (u', P^{c'} (u'))\big)  &=& \left\lbrace
\begin{array}{lcl}
\big(t, P^{\tilde c'} (t) \big) \;\;  \text{if} \;\; u = t/s, \;\; \; u' = R^{c} (u)\\
0  \;\; \;\; \;\; \;\; \text{if not},
\end{array}\right.
\end{eqnarray*}
where ${\tilde c'}$ is the raising of $\bar c'$ to $t$. The cut ${\tilde c'}$ can be also seen as raising of $c'$ to $t$. Therefore:   
\begin{eqnarray*}
\psi \circ (id \otimes \circledast) \big((t, s) \otimes (u, P^c (u))\otimes (u', P^{c'} (u'))\big)  &=& \left\lbrace
\begin{array}{lcl}
\big(t, P^{\tilde c'} (t) \big) \;\;  \text{if} \;\; u = t/s,\;\;\; u' = R^{c} (u)\\
0  \;\; \;\; \;\; \;\; \text{if not},
\end{array}\right.
\end{eqnarray*}
where ${\tilde c'}$ is the raising of $c'$ to $t$.

On the other hand, we have:
\begin{eqnarray*}
(\psi \otimes id) \big((t, s)\otimes (u, P^{c} (u))\otimes (u', P^{c'} (u'))\big) = \left\lbrace
\begin{array}{lcl}
\big(u, P^{\tilde c} (t) \big) \otimes \big(u', P^{c'} (u')\big) \;\;  \text{if} \;\; u = t/s\\
0  \;\; \;\; \;\; \;\; \text{if not},
\end{array}\right.
\end{eqnarray*}
where ${\tilde c}$ is the raising of $c$ to $t$. Then:
\begin{eqnarray*}
\circledast \circ(\psi \otimes id) \big((t, s)\otimes (u, P^{c} (u)) \otimes (u', P^{c'} (u')\big)
&=& \left\lbrace\begin{array}{lcl}
\big(t, P^{\tilde c} (u) \cup P^{c'} (u')  \big) \;\;  \text{if} \;\; {u = t/s, \atop u' = R^{\tilde c} (t)}\\
0  \;\; \;\; \;\; \;\; \text{if not}
\end{array}\right.\\
&=& \left\lbrace\begin{array}{lcl}
\big(t, P^{\tilde c'} (t) \big) \;\;  \text{if} \;\;\; {u = t/s,  \atop u' = R^{\tilde c} (t) = R^{c} (u)}\\
0  \;\; \;\; \;\; \;\; \text{if not},
\end{array}\right.
\end{eqnarray*}
where ${\tilde c'}$ is the raising of $c'$ to $t$. Therefore:
$$\psi \circ (id \otimes \circledast) = \circledast \circ (\psi \otimes id),$$
and consequently the diagram is commutative.
\end{proof}
\begin{definition}
Let $\xi : \tilde V \otimes \tilde V \otimes V \longrightarrow \tilde V \otimes V$ be the map defined by:
\begin{enumerate}
\item $\xi \big( (t', s') \otimes (t'',s'')\otimes (u, v)\big) = (t', s' \cup s'')\otimes (t'/(s' \cup s''), v),$\\
if $t'' = R^c(t')$, $v = P^{\tilde{c}}(t')$ and $u = t'/s'$, where $c$ is an admissible cut of $t'$, and $\tilde{c}$ is an admissible cut of $t'$ does not meet $s'$ and $s''$.
\item $\xi \big( (t', s') \otimes (t'',s'')\otimes (u, v)\big) = 0,$\\ 
if $t', t'', s', s'', u$ and $v$ are forests which do not match the conditions of item (1). 
\end{enumerate} 
\end{definition}
\begin{theorem}
The two maps $\phi$ and $\xi$ make the following diagram commute:
\diagramme{
\xymatrix{
V \ar[d]_{\Delta} \ar[rr]^{\phi} 
&&\tilde V \otimes V  \ar[d]^{id \otimes \Delta}\\
 V \otimes V \ar[d]_{\phi\otimes \phi}&&\tilde V \otimes V \otimes V \\
\tilde V \otimes V \otimes \tilde V \otimes V \ar[rr]_{\tau^{23}} 
&&\ar[u]_{\xi \otimes id } \tilde V \otimes \tilde V \otimes  V \otimes V
 }
}
i.e :
\begin{equation}
(id \otimes \Delta) \circ \phi = (\xi \otimes id ) \circ \tau^{23} \circ (\phi\otimes \phi) \circ \Delta. 
\end{equation}
\end{theorem}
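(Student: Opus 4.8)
The plan is to prove the identity
\[
(id\otimes\Delta)\circ\phi=(\xi\otimes id)\circ\tau^{23}\circ(\phi\otimes\phi)\circ\Delta
\]
by evaluating both sides on a generator $(t,s)\in V$ and matching the resulting expansions term by term through an explicit bijection of summation indices. First I would record the left-hand side. Since $\phi(t,s)=\sum_{s'\subseteq s}(t,s')\otimes(t/s',s/s')$, applying $id\otimes\Delta$ and expanding $\Delta(t/s',s/s')=\sum_{c\in\mop{Adm}(s/s')}(t/s',P^c(s/s'))\otimes(R^c(t/s'),R^c(s/s'))$ gives
\[
(id\otimes\Delta)\circ\phi(t,s)=\sum_{s'\subseteq s}\ \sum_{c\in\mop{Adm}(s/s')}(t,s')\otimes\big(t/s',P^c(s/s')\big)\otimes\big(R^c(t/s'),R^c(s/s')\big),
\]
which is exactly the common value already computed in the preceding theorem for $(\phi\otimes id)\circ\Delta$.

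Next I would compute the right-hand side in the four indicated steps. Starting from $\Delta(t,s)=\sum_{\gamma\in\mop{Adm}(s)}(t,P^\gamma(s))\otimes(R^\gamma(t),R^\gamma(s))$, applying $\phi\otimes\phi$ introduces two independent subforests $p\subseteq P^\gamma(s)$ and $q\subseteq R^\gamma(s)$, producing a sum of terms
\[
(t,p)\otimes\big(t/p,P^\gamma(s)/p\big)\otimes\big(R^\gamma(t),q\big)\otimes\big(R^\gamma(t)/q,R^\gamma(s)/q\big)
\]
in $\tilde V\otimes V\otimes\tilde V\otimes V$. The swap $\tau^{23}$ brings this to $\tilde V\otimes\tilde V\otimes V\otimes V$, placing $(t,p)$ and $(R^\gamma(t),q)$ in the first two slots and $(t/p,P^\gamma(s)/p)$ in the third. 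On these first three factors I would check that the defining conditions of $\xi$ hold for every summand: the equality $t''=R^\gamma(t)=R^c(t)$ identifies the cut $c$ of $\xi$ with $\gamma$ viewed as a cut of $t$, the condition $u=t/p=t'/s'$ holds tautologically, and the third factor $P^\gamma(s)/p$ is precisely the pruning determined by $\gamma$ raised to $t$, the cut $\gamma$ meeting neither $p$ (which lies above $\gamma$) nor $q$ (which lies below $\gamma$). Feeding this into $\xi\otimes id$ then yields
\[
(\xi\otimes id)\circ\tau^{23}\circ(\phi\otimes\phi)\circ\Delta(t,s)=\sum_{\gamma\in\mop{Adm}(s)}\ \sum_{p\subseteq P^\gamma(s)}\ \sum_{q\subseteq R^\gamma(s)}(t,p\cup q)\otimes\big(t/(p\cup q),P^\gamma(s)/p\big)\otimes\big(R^\gamma(t)/q,R^\gamma(s)/q\big).
\]

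Finally I would match the two triple sums via the bijection $(\gamma,p,q)\longleftrightarrow(s',c)$ given by $s'=p\cup q$ and $c=\gamma$ regarded as a cut of $s/s'$. The crucial point is that the cut edges of $\gamma$ belong to neither $P^\gamma(s)$ nor $R^\gamma(s)$, so the covering subforest $s'=p\sqcup q$ never crosses $\gamma$; hence a subforest $s'\subseteq s$ together with a cut $c$ of $s/s'$ determines uniquely its above-$\gamma$ and below-$\gamma$ parts $p=s'\cap P^\gamma(s)$ and $q=s'\cap R^\gamma(s)$, and conversely. Under this correspondence the three tensor slots agree because of the commutation identities between cutting and contraction, namely $t/(p\cup q)=t/s'$, $P^\gamma(s)/p=P^c(s/s')$, $R^\gamma(s)/q=R^c(s/s')$ and $R^\gamma(t)/q=R^c(t/s')$, each of which expresses that contracting the part of $s'$ lying on one side of $\gamma$ commutes with taking the pruning or the trunk along $\gamma$. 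The main obstacle is exactly this last verification: one must confirm that $\xi$ is applicable and returns a nonzero value on every summand, and that pruning and trunk commute with contraction in the stated way, since this interchange of \emph{cut} and \emph{contract} is the technical heart that makes the diagram close.
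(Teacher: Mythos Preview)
Your proposal is correct and follows essentially the same route as the paper's proof: both sides are expanded explicitly on a generator, $\xi$ is applied termwise to the first three tensor factors produced by $\tau^{23}\circ(\phi\otimes\phi)\circ\Delta$, and the resulting triple sum over $(\gamma,p,q)$ is re-indexed as a double sum over $(s',c)$ with $s'=p\cup q$ and $c$ the image of $\gamma$ in $s/s'$. If anything, your treatment of the bijection and of the commutation identities $P^\gamma(s)/p=P^c(s/s')$, $R^\gamma(t)/q=R^c(t/s')$, $R^\gamma(s)/q=R^c(s/s')$ is spelled out more carefully than in the paper, which passes through the intermediate step ``$s\subseteq v$ containing no edge of $c$'' before arriving at the same conclusion.
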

\begin{proof}
We have:
\begin{eqnarray*}
(id\otimes \Delta)\circ \phi (u,v) &=& (id\otimes \Delta) \big(\sum_{s\subseteq v} (u , s) \otimes(u/s, v/s)\big)\\
&=& \sum_{s\subseteq v \atop c \in {\mop{\tiny{Adm}}(v/s)}} (u , s) \otimes \big(u/s , P^{c} (v/s)\big) \otimes \big( R^{c}(u/s), R^{c}(v/s)\big).
\end{eqnarray*}
On the other hand, we have:
\begin{eqnarray*}
&&(\xi \otimes id ) \circ \tau^{23} \circ (\phi\otimes \phi) \circ \Delta (u,v) = (\xi \otimes id ) \circ \tau^{23}\Big(\sum_{c \in {\mop{\tiny{Adm}}(v)}}  \phi \big(u, P^{c}(v)\big)\otimes \phi\big( R^{c}(u), R^{c}(v)\big)\Big)  \\
&&= (\xi \otimes id ) \circ \tau^{23} \Big(\sum_{c \in {\mop{\tiny{Adm}}(v)}} \sum_{s' \subseteq P^{c} (v) \atop s' \subseteq R^{c} (v)}\hskip-0.2cm(u, s')\otimes \big(u/s', P^{c} (v)/s'\big) \otimes  \big(R^{c}(u), s''\big) \otimes \big(R^{c}(u)/s'' , R^{c}(v)/s'' \big)\Big)\\
&&= (\xi \otimes id )\Big(\sum_{c \in {\mop{\tiny{Adm}}(v)}} \sum_{s' \subseteq P^{c} (v) \atop s'' \subseteq R^{c} (v)}(u, s')\otimes  \big(R^{c}(u), s''\big) \otimes  \big(u/s', P^{c} (v)/s'\big)\otimes \big(R^{c}(u)/s'' , R^{c}(v)/s'' \big)\Big)\\
&&= \sum_{c \in {\mop{\tiny{Adm}}(v)}} \sum_{s' \subseteq P^{c} (v) \atop s''\subseteq R^{c} (v)}(u, s' \cup s'')\otimes  \Big(u\big\slash s'\cup s'', P^{c} (v)\Big\slash\ (s'\cup s'')\cap P^{c} (v)\Big) \otimes \big(R^{c}(u)/s'' , R^{c}(v)/s'' \big)\\
&&=\sum_{c \in {\mop{\tiny{Adm}}(v)}} \sum_{s \subseteq v \atop \text{containing no edge of c}}\hskip-0.8cm(u, s)\otimes  \big(u/s, P^{c} (v)\big\slash\ s\cap P^{c} (v)\big) \otimes \big(R^{c}(u)\big\slash\ s\cap R^{c}(u) , R^{c}(v)\big\slash\ s\cap R^{c}(v) \big)\\
&&= \sum_{s \subseteq v/s} \sum_{c \in {\mop{\tiny{Adm}}(v/s)}}(u, s)\otimes  \big(u/s, P^{c} (v/s)\big) \otimes \big(R^{c}(u/s) , R^{c}(v/s) \big).
\end{eqnarray*}
Hence: $(id\otimes \Delta)\circ \phi = (\xi \otimes id ) \circ \tau^{23} \circ (\phi\otimes \phi) \circ \Delta$, which proves the theorem. 
\end{proof}
\begin{remark}
We notice here that, this diagram extends to the commutative diagram:
\diagramme{
\xymatrix{
D \ar[d]_{\Delta} \ar[rr]^{\phi} 
&&\tilde D \otimes D  \ar[d]^{id \otimes \Delta}\\
 D \otimes D \ar[d]_{\phi\otimes \phi}&&\tilde D \otimes D \otimes D \\
\tilde D \otimes D \otimes \tilde D \otimes D \ar[rr]_{\tau^{23}} 
&&\ar[u]_{\xi \otimes id } \tilde D \otimes \tilde D \otimes  D \otimes D
 }
}
where the arrows are now algebra morphisms.
\end{remark}


\end{document}